\documentclass{IEEEtran}
\usepackage{amsfonts}
\usepackage{algorithmic}
\usepackage{textcomp}
\def\BibTeX{{\rm B\kern-.05em{\sc i\kern-.025em b}\kern-.08em
    T\kern-.1667em\lower.7ex\hbox{E}\kern-.125emX}}

%%% General packages
\usepackage[noadjust]{cite} 		  % citations
\usepackage{xcolor}       % colors
\hyphenation{op-tical net-works semi-conduc-tor} % hyphenation fix

%%% Graphics package, directories and extensions
\usepackage{graphicx}
\DeclareGraphicsExtensions{.eps,.png,.jpg,.jpeg,.bmp,.gif,.pdf}

%%% Table packages
\usepackage{tabularx,booktabs}	% use more thick toplines and bottom lines
\usepackage{multirow}	% multiple columns in one cell

%%% Maths Packages
\usepackage{amsmath}      		% equations
\usepackage{amssymb}	  		% symbols
\usepackage{bm}		      		% bold symbols
\usepackage{physics}	  		% derivatives
\usepackage{soul}				% strikethrough command (\st)
% \usepackage{relsize}			% for large summation, integral, etc symbols

% %%% Notations
		% natural numbers
		% integer numbers
\newcommand{\R}{\mathbb{R}}		% real numbers
\newcommand{\C}{\mathbb{C}}		% complex numbers
\newcommand{\transp}{\mathsf{T}}					% transpose

%%% Theorems, Definitions, etc
\usepackage{amsthm}

\newtheorem{thm}{Theorem}
\newtheorem{cor}{Corollary}

\theoremstyle{definition}

\newtheorem{rem}{Remark}

   %
%\newenvironment{proof}{{\bfseries Proof.}}{\hspace*{\fill}\qed}

%%% Revisions
\newcommand{\arthur}[1]{\textcolor{black}{#1}}
\newcommand{\rev}[1]{#1}%\textcolor{blue}{#1}}

\usepackage{mathtools}

% \newenvironment{coloredcorollary}
%   {\def\@thmcolor{blue}\corollary}
%   {\endcorollary}

% % Custom command to change the color of the corollary title
% \makeatletter
% \newenvironment{bluecorollary}[1][\unskip] % Handle optional argument without showing []
%   {\def\@begintheorem##1##2{\noindent \textcolor{blue}{\upshape{\textbf{##1\ ##2}}}\ \upshape{#1}\noindent}\begin{cor}}
%   {\end{cor}}
% \makeatother

\usepackage[colorlinks,allcolors=blue]{hyperref}

%=======================================================================
\begin{document}

\title{Duality between controllability and observability for target control and estimation in networks}

\author{Arthur N. Montanari, Chao Duan, \textit{Member, IEEE}, and Adilson E. Motter, \textit{Senior Member, IEEE}
\thanks{The authors acknowledge the US Army Research Office Grant W911NF-19-1-0383 and the use of Quest High-Performance Computing Cluster at Northwestern University. C. D. acknowledges the State Key Development Program for Basic Research of China Grant 2022YFA1004600 and the National Natural Science Foundation of China Grant GQQNKP001. A. N. M. and C. D. contributed equally.}
\thanks{A. N. Montanari is with the Department of Physics and Astronomy and the Center for Network Dynamics, Northwestern University, Evanston, IL 60208, USA (e-mail: arthur.montanari@northwestern.edu).
C.~Duan is with the School of Electrical Engineering, Xi'an Jiaotong University, Xi'an 710049, China (e-mail: cduan@xjtu.edu.cn).
A. E. Motter is with the Department of Physics and Astronomy, the Center for Network Dynamics, the Department of Engineering Sciences and Applied Mathematics, and the Northwestern Institute on Complex Systems, Northwestern University, Evanston, IL 60208, USA (e-mail: motter@northwestern.edu).}
\vspace{-0.3cm}
}

\maketitle

% Controllability and observability are properties that establish the existence of full-state controllers and observers, respectively. The notions of output controllability and functional observability are generalizations that enable respectively the control and estimation of part of the state vector. These properties are of utmost importance in applications to high-dimensional systems, such as large-scale networks, in which only a target subset of variables (nodes) are sought to be controlled or estimated.
% %
% Although the duality between controllability and observability is well established, the characterization of the duality between their generalized counterparts remains an outstanding problem. 

\begin{abstract}       
Output controllability and functional observability are properties that enable, respectively, the control and estimation of \textit{part} of the state vector. These notions are of utmost importance in applications to high-dimensional systems, such as large-scale networks, in which only a target subset of variables (nodes) is sought to be controlled or estimated. Although the duality between full-state controllability and observability is well established, the characterization of the duality between their generalized counterparts remains an outstanding problem. Here, we establish both the weak and the strong duality between output controllability and functional observability. Specifically, we show that functional observability of a system implies output controllability of a dual system (weak duality), and that under a certain geometric condition the converse holds (strong duality). As an application of the strong duality, we derive a necessary and sufficient condition for target control via static feedback. This allow us to establish a separation principle between the design of target controllers and the design of functional observers in closed-loop systems. These results generalize the classical duality and separation principles in modern control theory.
\end{abstract}

% The notions of output controllability and functional observability are generalizations that enable respectively the control and estimation of part of the state vector. These generalizations are of utmost importance in applications to large-scale networks in which only a target subset of variables are sought to be controlled or estimated. Despite the classical duality between full-state controllability and observability, the characterization of the duality between their generalized counterparts remains an outstanding problem. Here, we establish both the weak and the strong duality between output controllability and functional observability. Specifically, we show that functional observability of a system implies output controllability of a dual system (weak duality), and that under a certain condition the converse also holds (strong duality). As an application of the strong duality principle, we derive the conditions for target control via static feedback. This allow us to establish a separation principle between the design of a target controller and a functional observer in closed-loop systems. These results generalize the duality and separation principles in modern control theory.

\vspace{-0.1cm}

\begin{IEEEkeywords}                  
duality principle, separation principle, target control, geometric approach, static feedback  
\end{IEEEkeywords}                  

\smallskip
Published in \textit{IEEE Transactions on Automatic Control}, DOI: \href{https://doi.org/10.1109/TAC.2025.3552001}{10.1109/TAC.2025.3552001}.

%=======================================================================
\vspace{-0.3cm}

\section{Introduction}

\IEEEPARstart{D}{uality} is a mathematical concept that enables the solution of one problem to be mapped to the solution of another problem. The most fundamental example of duality in modern control theory is that between controllability and observability, as introduced by Kalman \cite{Kalman1959}: A system is observable if and only if the dual (transposed) system is controllable. Moreover, when a system is both controllable and observable, the \textit{separation principle} further establishes that the design of a feedback controller and of a state observer are mutually independent. The latter facilitates the design of a closed-loop system with feedback from estimated states.

% \IEEEPARstart{D}{uality} is a mathematical concept that enables the solution of one problem to be mapped to the solution of another problem. 
% %
% The most fundamental example of duality in modern control theory is that between controllability and observability, as introduced by Kalman \cite{Kalman1959}. Controllability (observability) characterizes the necessary and sufficient condition for the existence of a controller (observer) capable of steering (estimating) the full state of a dynamical system. The \textit{duality principle} between controllability and observability states that a system is observable if and only if the dual (transposed) system is controllable. When a system is both controllable and observable, the \textit{separation principle} further establishes that the design of a feedback controller and of a state observer are mutually independent. The latter facilitates the design of a closed-loop system with feedback from estimated states.

The analysis of controllability and observability has laid a theoretical foundation for \textit{full-state} controller and observer design, respectively. However, the control and estimation of the entire system state is often unfeasible or not required in high-dimensional systems of current interest, such as large-scale networks \cite{Motter2015,Liu2016,Montanari2020}. %parmer2023dynamical
The unfeasibility may arise from physical and/or costs constraints in the placement of actuators and sensors \cite{Pasqualetti2013,Summers2016} or from a prohibitively high energy required for the operation of these components \cite{Duan2019}. %Yan2015
These practical limitations led to the development of methods to control and estimate only \textit{part} of the state vector of the system \cite{Darouach2000,Vorotnikov2005}, which subsequently motivated the generalized concepts of output controllability \cite{Bertram1960} and functional observability~\cite{Fernando2010}. These properties characterize the \textit{minimal} conditions that enable the control and estimation of pre-specified lower-order functions of the state variables. 
As a result, these conditions can lead to a substantial order reduction in controller and observer synthesis, as demonstrated in applications to the target control \cite{Gao2014,Casadei2020} and target estimation \cite{Montanari2022,Zhang2023b} of selected subsets of state variables (nodes) in large-scale dynamical networks. 

Unlike the well-established concepts of full-state controllability and observability \cite{Kalman1959}, the notions of output controllability and functional observability emerged in the literature in different contexts, half a century apart \cite{Bertram1960,Fernando2010}. Yet, despite their shared goals (to control and estimate a lower-order function, respectively), the existence of a mapping between the functional observability of a system and the output controllability of another (dual) system is yet to be established. 
This is the case because the relation between these generalized properties, as well as between the set of observable and controllable state variables \cite{Iudice2019}, does not follow straightforwardly from the classical duality principle. %(as explicitly shown in Section~\ref{sec.example.notduality}).
Consequently, algorithms developed for optimal actuator placement in output controllability \cite{Gao2014,Waarde2017,Czeizler2018,commault2019functional} cannot be employed with guaranteed performance for optimal sensor placement in functional observability \cite{Montanari2022}, and vice versa. 
This is in contrast with the case of full-state controllability and observability, in which their duality enables a single algorithm to solve both placement problems \cite{Liu2011,Siami2020}.

In this paper, we establish a duality principle  between output controllability and functional observability. Motivated by the notions of weak and strong duality in optimization theory\footnote{In optimization theory, the ``weak'' duality principle establishes that the optimal solution of a dual problem provides a bound to the solution of a primal problem. The notion of ``strong'' duality usually requires additional conditions and concerns scenarios where the difference between the solutions of the primal and the dual problem (also known as the duality gap) is zero.} \cite{balinski1969duality}, %,Hamedani2022}, 
we derive a weak duality between both properties in which the functional observability of a system implies the output controllability of a dual (transposed) system. Moreover, under a specific condition, we establish the strong duality in which the converse also holds. 
For output controllable or functionally observable systems, we propose Gramian-based measures to quantify the amount of energy required to control or estimate a lower-order function.
Strong duality leads to the necessary and sufficient conditions for target control via static feedback: When a system is output controllable and its dual system is functionally observable, we show that static feedback control of a linear function of the state variables is possible. Based on these results, we then establish a separation principle in closed-loop systems between the design of a feedback target controller and that of a functional observer.
These contributions are shown to be the natural extensions of the duality and separation principles in modern control theory.

%The paper is organized as follows. Section~\ref{sec.prelim} reviews the notions of output controllability and functional observability. Sections~\ref{sec.duality} and~\ref{sec.separation} present our main results, respectively establishing the duality and separation principles between these generalized properties. Section \ref{sec.conclusion} summarizes our contributions and discusses opportunities for future research.

%=======================================================================
\vspace{-0.1cm}
\section{Preliminaries}
\label{sec.prelim}

%-----------------------------------------------------------
% Output controllability

Consider the linear time-invariant dynamical system
\begin{align}
    \dot{\bm x} &= A\bm x + B\bm u, 
    \label{eq.dynsys}
    \\
    \bm y &= C\bm x,
    \label{eq.output}
\end{align}
where $\bm x\in\R^n$ is the state vector, $\bm u\in\R^p$ is the input vector, $\bm y\in\R^q$ is the output vector, $A\in\R^{n\times n}$ is the system matrix, $B\in\R^{n\times p}$ is the input matrix, and $C\in\R^{q\times n}$ is the output matrix.
%
%By convention, we use $(A,B)$ to analyze the controllability of a system with system matrix $A$ and input matrix $B$, and $(C,A)$ to analyze the observability of a system with output matrix $C$ and system matrix $A$.
%
The \textit{linear function} of the state variables
\begin{equation}
    \bm z = F\bm x
    \label{eq.target}
\end{equation}
defines the \textit{target} vector $\bm z\in\R^r$ sought to be controlled or estimated, where $F\in\R^{r\times n}$ is the functional matrix ($r\leq n$).

Let the \emph{\rev{output controllable set}} $\mathcal{C}_F$ of a system \eqref{eq.dynsys}--\eqref{eq.target} be the set of all reachable states $\bm{z^*} \in \mathbb{R}^r$ for which there exists an input $\bm{u}(t)$ that steers the system, in finite time $t\in[0,t_1]$, from an initial state $\bm{x}(0)={0}$ to some final state $\bm{x}(t_1)$ satisfying $\bm{z}(t_1) = F\bm{x}(t_1) := \bm z^*$. The system represented by the triple $(A,B;F)$ is output controllable when $\mathcal{C}_F=\R^r$. A necessary and sufficient condition for output controllability is \cite{Bertram1960,Lazar2020}
\begin{equation}
    \operatorname{rank}(F\mathcal C) = \operatorname{rank}(F),
    \label{eq.outputctrb}
\end{equation}
where $\mathcal C = [B \,\, AB \,\, A^2B \,\, \ldots \,\, A^{n-1}B]$ is the controllability matrix. In particular, condition \eqref{eq.outputctrb} is necessary and sufficient for the invertibility of $F W_c(t_1)F^\transp$, %\cite[Sec. 9.6]{OgataBook}, 
which guarantees that, for each $\bm{z^*}\in \mathcal{C}_F$, there exists a control law
\begin{equation}
    \bm u(t) = B^\transp e^{A^\transp(t_1-t)} F^\transp (F W_c(t_1)F^\transp)^{-1} \bm z^*
\label{eq.statedependentcontrol}
\end{equation}
capable of steering the target vector from $\bm z(0)={0}$ to $\bm z(t_1)=\bm{z^*}$, where $W_c(t)=\int_0^{t} e^{A(t-\tau)}BB^\transp e^{A^\transp(t-\tau)}\text{d}\tau$ is the controllability Gramian. \arthur{The set of reachable states $\bm z^*$ is thus given by $\mathcal{C}_F = F\operatorname{Im}(W_c)$.}

\begin{rem}     \label{remark.output-vs-target}
    The nomenclature ``output'' controllability was originally motivated by applications in which the target vector sought to be controlled was precisely the output ($F=C$) \cite{Bertram1960,Lazar2020}. %Morse1971,OgataBook
    However, this property can be generally defined for any $F$. This led to the notion of partial stability and control \cite{Vorotnikov2005} as well as the contemporaneous terminology target controllability for networks where only a subset of variables (nodes) is sought to be controlled \cite{Gao2014,Waarde2017,Czeizler2018,commault2019functional,li2020structural,Shali2022}.
\end{rem}

%-----------------------------------------------------------
% Functional observability

In addition, let the \emph{\rev{functionally observable set}} $\mathcal{O}_F$ of system \eqref{eq.dynsys}--\eqref{eq.target} be the set of all $\bm z^*\in\R^r$ such that the state $\bm z(0) = F\bm x(0) := \bm z^*$ can be uniquely determined from the output $\bm y(t)$ and input $\bm u(t)$ signals over $t\in[0,t_1]$. The system represented by the triple $(C,A;F)$ is functionally observable when $\mathcal{O}_F=\mathbb{R}^r$. A necessary and sufficient condition for this property is
%\footnote{See \cite{Rotella2016} for a proof that the condition stated in \cite[Theorem 5]{Jennings2011} holds if and only if \eqref{eq.functobsv} is satisfied.}

\begin{equation}
    \operatorname{rank}\left(
    \begin{bmatrix}
        \mathcal O \\ F
    \end{bmatrix}
    \right)
    =
    \operatorname{rank}(\mathcal O),
    \label{eq.functobsv}
\end{equation}
where $\mathcal O = [C^\transp \, (CA)^\transp \, (CA^2)^\transp \, \ldots \, (CA^{n-1})^\transp]^\transp$ is the~observability matrix (see proof in \cite[Th.~5]{Jennings2011}, \cite[Sec.~I]{Rotella2016}). Given the observability Gramian $W_o(t)=\int_0^t e^{A^\transp\tau}C^\transp C e^{A\tau}{\rm d}\tau$, condition \eqref{eq.functobsv} is also necessary and sufficient for the existence of some matrix $G\in\R^{r\times n}$ such that $GW_o(t_1)=F$. This guarantees reconstruction of $\bm z(0)$ in finite time \cite{Montanari2022}, that is,
\begin{align}
         W_o(t_1)\bm x(0) &= \int_0^{t_1} e^{A^\transp t}C^\transp \bm{h}(t,\bm{u},\bm{y})  \text{d}t
         \label{eq.reconstructible_x0}
         %\text{implies} \quad \bm z(0) &= G\int_0^{t_1} e^{A^\transp t}C^\transp \bm{h}(t,\bm{u},\bm{y})  \text{d}t,
         %\label{eq.reconstructible_z0}
\end{align}
implies $\bm z(0) = G\int_0^{t_1} e^{A^\transp t}C^\transp \bm{h}(t,\bm{u},\bm{y}) \text{d}t$, 
where $\bm{h}(t,\bm{u},\bm{y}) =\bm{y}(t)- C\int_0^t e^{-A \tau} \bm{B} \bm{u}(\tau)  {\rm d}\tau $ is a functional of the input $\bm{u}(t)$ and output $\bm{y}(t)$ over $t\in [0,t_1]$. \arthur{Thus, $\mathcal O_F = \{\bm z^*\in\R^r : \bm z^* \perp F\operatorname{Ker}(W_o) \,\, \text{and} \,\, \bm z^*\in F\Im(W_o) \}$ (see proof of Theorem~\ref{theor.weakduality}).}

%-----------------------------------------------------------
% Generalizations
% \begin{rem}
%     If all state variables are sought to be controlled or estimated (i.e., $F=I_n$, where $I_n$ is the identity matrix of size $n$), then conditions \eqref{eq.outputctrb} and \eqref{eq.functobsv} reduce, respectively, to the classical conditions of full-state controllability ($\operatorname{rank}(\mathcal C) = n$) and full-state observability ($\operatorname{rank}(\mathcal O) = n$).
%     \label{remark.generalization}
% \end{rem}

Throughout, we use $(A,B;F)$ to analyze the output controllability of a system with system matrix $A$, input matrix $B$, and functional matrix $F$. The term ``target control'' refers to the control of the target vector $\bm z(t)$ via an input $\bm u(t)$. Likewise, we use $(C,A;F)$ to analyze the functional observability of a system with output matrix $C$, system matrix $A$, and functional matrix $F$, and ``target estimation'' refers to the estimation of $\bm z(t)$ from an output $\bm y(t)$. {Furthermore, we assume that $F$ has linearly independent rows (i.e., $\rank(F)=r$), which guarantees, when output controllability holds, the existence of an input $\bm u(t)$ that can drive independently every component $\bm z_i(t)$ of the target vector to an arbitrary state $\bm z_i^\star$.}

\section{Duality Principle}
\label{sec.duality}

We now generalize the concept of duality between controllability and observability. Consider a pair of systems $(C,A;F)$ and $(A^\transp,C^\transp;F)$. We show that the functional observability of the former is in general a sufficient (but unnecessary) condition for the output controllability of the latter. This result will thus establish the \emph{weak duality} between functional observability and output controllability. By further imposing a certain condition on the system matrices, the functional observability of $(C,A;F)$ becomes equivalent to the output controllability of $(A^\transp,C^\transp;F)$, which we call the \emph{strong duality}.

In what follows, let $\mathcal O_F$ be the functionally observable set of the system $(C,A;F)$ and $\mathcal C_F$ be the output controllable set of the dual system $(A^\transp,C^\transp;F)$. Given any final time $t_1$, the observability Gramian $W_o(t_1)$ of system $(C,A)$ and the controllability Gramian $W_c(t_1)$ of system $(A^\transp,C^\transp)$ coincide, and we denote it by $W = \int_{0}^{t_1}e^{A^\transp\tau}C^\transp C e^{A\tau} d \tau$. Let $W=U\Lambda U^H$ be the eigendecomposition of the symmetric matrix $W$, where $\Lambda$ is the diagonal of eigenvalues and $U$ is the unitary matrix of eigenvectors. We partition $U$ as $U = [U_1 \,\,\, U_2]$, where $U_1$ and $U_2$ consist of columns corresponding to nonzero and zero eigenvalues, respectively. Clearly, $W = U_1 \Lambda_1 U_1^H$, where $\Lambda_1$ is the diagonal matrix of nonzero eigenvalues. We have that $\operatorname{Im}(W) = \operatorname{Im}(U_1)$ and $\operatorname{Ker}(W) = \operatorname{Im}(U_2)$.
\arthur{Note that the image and null spaces of $W$ are independent of the specific choice of basis for $U_1$ and $U_2$.}

\begin{thm}[Weak duality]
    \label{theor.weakduality}
For any given pair of systems $(C,A;F)$ and $(A^\transp,C^\transp;F)$, the relation $\mathcal{O}_F \subseteq \mathcal{C}_F$~holds. Therefore, if $(C,A;F)$ is functionally observable, then $(A^\transp,C^\transp;F)$ is output controllable.
\end{thm}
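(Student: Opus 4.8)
The plan is to obtain explicit descriptions of the two sets $\mathcal C_F$ and $\mathcal O_F$ in terms of the single Gramian $W$ and then compare them. Recall from the discussion preceding the statement that the observability Gramian $W_o(t_1)$ of $(C,A)$ and the controllability Gramian $W_c(t_1)$ of $(A^\transp,C^\transp)$ are one and the same matrix $W=U_1\Lambda_1U_1^H$, with $\operatorname{Im}(W)=\operatorname{Im}(U_1)$ and $\operatorname{Ker}(W)=\operatorname{Im}(U_2)$. Feeding this into the reachable-set identity $\mathcal C_F=F\operatorname{Im}(W_c)$ of the preliminaries, applied to the dual triple, immediately gives $\mathcal C_F=F\operatorname{Im}(W)$. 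It therefore suffices to prove the sharper statement $\mathcal O_F=\{\bm z^*\in\R^r:\bm z^*\perp F\operatorname{Ker}(W)\ \text{and}\ \bm z^*\in F\operatorname{Im}(W)\}$, for then $\mathcal O_F\subseteq F\operatorname{Im}(W)=\mathcal C_F$ is immediate, and the ``therefore'' clause follows by putting $\mathcal O_F=\R^r$, which forces $\mathcal C_F=\R^r$.

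Proving this description of $\mathcal O_F$ is the heart of the matter and where I expect nearly all the work to be. The starting point is the classical finite-time observability fact that the input-output data on $[0,t_1]$ determines $\bm x(0)$ exactly up to the unobservable subspace $\bigcap_{t\in[0,t_1]}\operatorname{Ker}(Ce^{At})=\operatorname{Ker}(W_o(t_1))=\operatorname{Ker}(W)$; equivalently, as the reconstruction formula \eqref{eq.reconstructible_x0} makes explicit, the data pins down precisely the vector $W\bm x(0)\in\operatorname{Im}(W)$ and nothing beyond it. The task is then to track how $F$ propagates this information: writing $\bm x(0)=U_1a+U_2b$ so that the target is $\bm z^*=FU_1a+FU_2b$ while only $a$ is recoverable from the data and $b$ stays free, one shows that the portion of $\bm z^*$ carried by the unobservable directions, $\operatorname{Im}(FU_2)=F\operatorname{Ker}(W)$, is exactly the ambiguity that must be absent for unique reconstruction (giving $\bm z^*\perp F\operatorname{Ker}(W)$, equivalently $F^\transp\bm z^*\in\operatorname{Im}(W)$), while $\bm z^*$ must lie in the range of what can be reconstructed, $\operatorname{Im}(FU_1)=F\operatorname{Im}(W)$; for the converse one exhibits the decoder explicitly, e.g.\ $FW^{+}$, i.e.\ a $G$ with $GW$ agreeing with $F$ on $\operatorname{Im}(W)$. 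The subtle points are the two ``only if'' directions --- one must argue that \emph{no} reconstruction rule can squeeze out more information than $\operatorname{Im}(W)$ and $\operatorname{Ker}(W)$ permit --- and the careful treatment of the general case $F\operatorname{Ker}(W)\neq 0$, in which the target is only partially observable; that the description depends on $W$ alone (and not on the chosen bases $U_1,U_2$) is automatic, since only $\operatorname{Im}(W)$ and $\operatorname{Ker}(W)$ appear.

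With the description of $\mathcal O_F$ in hand, the conclusion is a single line: $\mathcal O_F\subseteq F\operatorname{Im}(W)=\mathcal C_F$, so $\mathcal O_F\subseteq\mathcal C_F$; and if $(C,A;F)$ is functionally observable then $\mathcal O_F=\R^r$, whence $\R^r\subseteq\mathcal C_F\subseteq\R^r$ and $(A^\transp,C^\transp;F)$ is output controllable. The main obstacle is thus wholly contained in the computation of $\mathcal O_F$; the Gramian identity, the reachable-set formula, and the final inclusion are routine consequences of the preliminaries. It is worth noting, for the strong-duality result to follow, that the reverse inclusion $\mathcal C_F\subseteq\mathcal O_F$ fails exactly when $F\operatorname{Im}(W)\nsubseteq(F\operatorname{Ker}(W))^\perp$, and that closing this gap is precisely the role of the geometric hypothesis imposed next.
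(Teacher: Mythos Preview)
Your proposal is correct and follows essentially the same route as the paper: both arguments split $\bm x(0)$ along $\operatorname{Im}(W)=\operatorname{Im}(U_1)$ and $\operatorname{Ker}(W)=\operatorname{Im}(U_2)$, use the reconstruction formula to show that the data fixes $FU_1U_1^H\bm x(0)$ while $FU_2U_2^H\bm x(0)$ remains free, and thereby obtain $\mathcal O_F=F\operatorname{Im}(W)\cap(F\operatorname{Ker}(W))^\perp=\mathcal C_F\cap(F\operatorname{Ker}(W))^\perp\subseteq\mathcal C_F$. Your parenthetical equivalence $\bm z^*\perp F\operatorname{Ker}(W)\Leftrightarrow F^\transp\bm z^*\in\operatorname{Im}(W)$ and the explicit decoder $G=FW^+$ are small additions not spelled out in the paper, but the skeleton is identical.
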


\begin{proof}
% According to \cite[Th. 2.2]{dullerud2013course}, the set of reachable states of system $(A^\transp,C^\transp)$ is equal to the image set of the controllability Gramian $\operatorname{Im}(W)$. Therefore, $\mathcal{C}_F = F\operatorname{Im}(W)$.
%
For system $(C,A;F)$, Eq. \eqref{eq.reconstructible_x0} is equivalent to
\begin{equation}
    U_1^H\bm{x}(0) = \Lambda_1^{-1}U_1^H\int_0^{t_1} e^{A^{\transp} t}C^{\transp} \bm{h}(t,\bm{u},\bm{y}) \text{d}t .
\end{equation}
Thus, $\bm{z}(0) = F\bm{x}(0)=FU_1U_1^H\bm{x}(0)+FU_2U_2^H\bm{x}(0)$, yielding
\begin{equation}\label{eqn:z0decompose}
\begin{aligned}
     \bm{z}(0)  =& \,\, FU_1 \Lambda_1^{-1}U_1^H\int_0^{t_1} e^{A^{\transp} t}C^{\transp} \bm{h}(t,\bm{u},\bm{y}) \text{d} t \\&  + F U_2 U_2^H \bm{x}(0).
\end{aligned}
\end{equation}
Since $\bm{x}(0)$ is arbitrary and unknown, it follows from Eq.~\eqref{eqn:z0decompose} that $\bm{z}(0)$ can be uniquely determined by the (known) functional $\bm h(t,\bm{u},\bm{y})$ \rev{only if $FU_2U_2^H\bm x(0) = 0$, which is equivalent to the condition $\bm{z}(0) \perp F\operatorname{Im}(U_2)$. Thus, Eq.~\eqref{eqn:z0decompose} implies that the necessary and sufficient condition for $\bm z(0)$ to be uniquely determined from $\bm h(t,\bm u,\bm z)$ is that $\bm{z}(0) \perp F\operatorname{Im}(U_2)$ and $\bm{z}(0) \in F\operatorname{Im}(U_1)$.} Consequently,
\begin{equation}\label{eqn:OfCfrelation}
\begin{aligned}
    \mathcal{O}_F = \operatorname{Im}(FU_1) \cap  (F\operatorname{Im}(U_2))^{\perp} = \mathcal{C}_F \cap (F\operatorname{Ker}(W))^{\perp}.
    \end{aligned}
\end{equation}
%
%Thus, $\mathcal{O}_F \subseteq \mathcal{C}_F$ and the functional observability of $(C,A;F)$ implies the output controllability of $(A^\transp,C^\transp;F)$.
It follows that $\mathcal{O}_F \subseteq \mathcal{C}_F$. If $(C,A;F)$ is functionally observable, then $\mathcal {O}_F=\R^r$ and hence $\mathcal C_F = \R^r$. Thus, by definition, $(A^\transp,C^\transp;F)$ is output controllable.
\end{proof}

\begin{thm}[Strong duality]
    \label{theor.strongduality}
For any given pair of systems $(C,A;F)$ and $(A^\transp,C^\transp;F)$, the relation $\mathcal{O}_F = \mathcal{C}_F$ holds if and only if $F\operatorname{Im}(W) \perp F\operatorname{Ker}(W)$. Under this condition, $(C,A;F)$ is functionally observable if and only if $(A^\transp,C^\transp;F)$ is output controllable.
\end{thm}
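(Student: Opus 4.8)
The plan is to derive Theorem~\ref{theor.strongduality} essentially as a corollary of the proof of Theorem~\ref{theor.weakduality}. The key ingredient is the identity $\mathcal{O}_F = \mathcal{C}_F \cap (F\operatorname{Ker}(W))^{\perp}$ from Eq.~\eqref{eqn:OfCfrelation}, together with the characterization $\mathcal{C}_F = F\operatorname{Im}(W_c) = F\operatorname{Im}(W)$ of the output controllable set of the dual pair $(A^\transp,C^\transp;F)$ (using that the controllability Gramian of $(A^\transp,C^\transp)$ is precisely $W$).

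First I would note that, because $\mathcal{O}_F$ is obtained by intersecting $\mathcal{C}_F$ with a fixed subspace, the equality $\mathcal{O}_F = \mathcal{C}_F$ holds if and only if $\mathcal{C}_F \subseteq (F\operatorname{Ker}(W))^{\perp}$. Substituting $\mathcal{C}_F = F\operatorname{Im}(W)$, this is exactly $F\operatorname{Im}(W) \subseteq (F\operatorname{Ker}(W))^{\perp}$, i.e., every vector of $F\operatorname{Im}(W)$ is orthogonal to every vector of $F\operatorname{Ker}(W)$. Since the bilinear condition $(Fv)^{\transp}(Fw) = 0$ for $v \in \operatorname{Im}(W)$, $w \in \operatorname{Ker}(W)$ is symmetric in the two subspaces, this one-sided containment is equivalent to the mutual orthogonality $F\operatorname{Im}(W) \perp F\operatorname{Ker}(W)$, which proves the stated equivalence.

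For the second assertion, under this geometric condition we have $\mathcal{O}_F = \mathcal{C}_F$, hence $\mathcal{O}_F = \R^r$ if and only if $\mathcal{C}_F = \R^r$; by the definitions in Section~\ref{sec.prelim}, this means $(C,A;F)$ is functionally observable if and only if $(A^\transp,C^\transp;F)$ is output controllable.

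There is no serious obstacle here, since Theorem~\ref{theor.weakduality} already supplies the structural relation between $\mathcal{O}_F$ and $\mathcal{C}_F$; the only points needing care are the explicit identification $\mathcal{C}_F = F\operatorname{Im}(W)$ for the transposed system and the (elementary) equivalence between subspace orthogonality and containment in an orthogonal complement. A fully self-contained alternative would phrase everything through the partition $U = [U_1 \,\, U_2]$: one has $\mathcal{C}_F = \operatorname{Im}(FU_1)$ and $(F\operatorname{Ker}(W))^{\perp} = (\operatorname{Im}(FU_2))^{\perp}$, so the condition becomes $\operatorname{Im}(FU_1) \subseteq (\operatorname{Im}(FU_2))^{\perp}$, equivalently $U_1^{H} F^{\transp} F U_2 = 0$.
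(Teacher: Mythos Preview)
Your proposal is correct and follows essentially the same route as the paper: invoke Eq.~\eqref{eqn:OfCfrelation} to reduce $\mathcal{O}_F=\mathcal{C}_F$ to $\mathcal{C}_F\subseteq (F\operatorname{Ker}(W))^{\perp}$, identify $\mathcal{C}_F=F\operatorname{Im}(W)$, and read off the orthogonality condition, then use the definitions for the second claim. The only difference is that you spell out a few steps (the symmetry of orthogonality, the $U_1,U_2$ reformulation) that the paper leaves implicit.
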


\begin{proof}
According to Eq.~\eqref{eqn:OfCfrelation}, $\mathcal{O}_F = \mathcal{C}_F$ if and only if $\mathcal{C}_F \subseteq (F\operatorname{Ker}(W))^{\perp}$, which is equivalent to $F\operatorname{Im}(W) \perp F\operatorname{Ker}(W)$. Moreover, if $\mathcal C_F = \mathcal O_F = \R^r$, then by definition $(C,A;F)$ is functionally observable and $(A^\transp,C^\transp;F)$ is output controllable.
\end{proof}

% \begin{rem}
% The weak and strong duality between output controllability and functional observability can also be verified in the corresponding rank-based conditions \eqref{eq.outputctrb} and \eqref{eq.functobsv}. Let $\mathcal C$ and $\mathcal O$ be the controllability and observability matrices of pairs $(A^\transp,C^\transp)$ and $(C,A)$, respectively. Note that the image spaces of $\mathcal O$, $\mathcal C$ and $W$ are equivalent. The weak duality follows from \eqref{eq.functobsv}, for $(C,A;F)$, implying \eqref{eq.outputctrb}, for $(A^\transp,C^\transp;F)$. Likewise, strong duality follows from \eqref{eq.outputctrb} implying \eqref{eq.functobsv} if $F\operatorname{Im}(W)\perp F\operatorname{Ker}(W)$.
% \end{rem}

\begin{rem} \label{rem.classicalduality}
    Theorem~\ref{theor.strongduality} generalizes the classical duality principle between full-state controllability and observability \cite[Th. 6.5]{Chi-TsongChen1999}. To see this, note that, for $F=I_n$ (where $I_n$ is the identity matrix), conditions \eqref{eq.outputctrb} and \eqref{eq.functobsv} reduce respectively to the classical conditions of full-state controllability ($\operatorname{rank}(\mathcal C) = n$) and full-state observability ($\operatorname{rank}(\mathcal O) = n$). Furthermore, the condition in Theorem~\ref{theor.strongduality} reduces to $\operatorname{Im}(W)\perp\operatorname{Ker}(W)$, which always holds since $W$ is symmetric. Therefore, $\mathcal O_{I_n}=\mathcal C_{I_n}$.% and the observability of $(C,A;I_n)$ implies and is implied by the controllability of $(A^\transp,C^\transp;I_n)$.
\end{rem}

Theorems \ref{theor.weakduality} and \ref{theor.strongduality} provide a geometric characterization of the duality in terms of the inclusion relation between the sets $\mathcal{C}_F$ and $\mathcal{O}_F$.  
These results have a direct interpretation in the geometric approach to control theory \cite{wonham1979book,basile1991book}. Condition~\eqref{eq.outputctrb} implies that $(A^\transp,C^\transp;F)$ is output controllable if and only if $F\langle A^\transp | \Im(C^\transp)\rangle = \R^r$ \cite{basile1969controlled}, where the controllable subspace $\langle A^\transp | \Im(C^\transp)\rangle = \sum_{i=1}^n (A^\transp)^{i-1}\Im(C^\transp)$ is the smallest $A^\transp$-invariant subspace containing $\Im(C^\transp)$.
Likewise, following condition \eqref{eq.functobsv}, $(C,A;F)$ is functionally observable if and only if $F\mathcal N = \{0\}$, where the unobservable subspace $\mathcal N = \bigcap_{i=1}^{n}\operatorname{Ker}(CA^{i-1})$ is the largest $A$-invariant subspace contained in $\operatorname{Ker}(C)$. By duality, $\mathcal N = \langle A^\transp | \operatorname{Im}(C^\transp) \rangle^\perp$.
From the proof of Theorem \ref{theor.weakduality}, we have that the set $\mathcal C_F = F\langle A | \Im(B)\rangle$ is a subspace. In turn, the set $\mathcal O_F$ is determined by Eq.~\eqref{eqn:OfCfrelation}, showing that $\mathcal O_F$ is related to the \textit{orthogonal complement} of $F\mathcal N$ and hence is also a subspace.
%
%From the duality principle, it follows that $\Im(F^\transp) \cap \mathcal N = \{0\}$ implies $F\mathcal N^\perp=\R^r$ and the converse holds if and only if $F\mathcal N^\perp \perp F\mathcal N$.
%
Specifically, strong duality holds if and only if $\gamma = 0$, where we define $\gamma = \operatorname{dim} (\mathcal{C}_F) - \operatorname{dim} (\mathcal{O}_F) \geq 0$ as the duality gap.

\begin{figure}[t!]
\begin{center}
\includegraphics[width=\columnwidth]{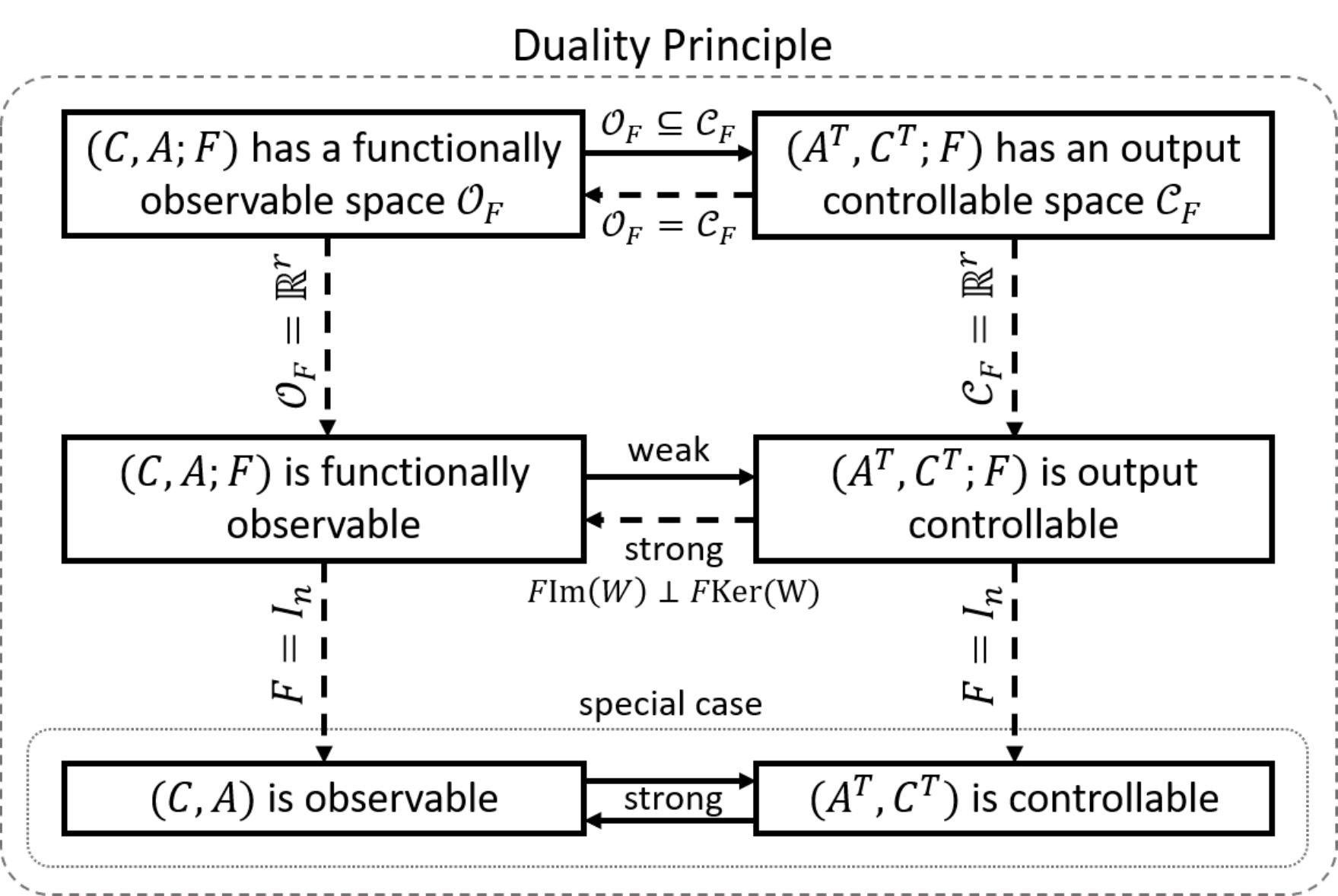} 
\caption{\label{fig.diagram} \rev{Diagram of the duality principle between functional observability and output controllability. Solid arrows indicate one property implies another, while dashed arrows indicate conditional implications.}}
\vspace{-0.6cm}
\end{center}                        
\end{figure}

%When $\mathcal{C}_F=\R^r$ and/or $\mathcal{O}_F=\R^r$ holds, this relation leads to the weak and strong duality principles between the output controllability and functional observability properties, as well as the classical cases for $F=I_n$.
%

Fig.~\ref{fig.diagram} shows a diagram of the established duality principle. We note that strong duality may hold even if the two systems are neither output controllable nor functionally observable; for example, the trivial case where $C = 0$ and $W = 0$, and hence $F\operatorname{Im}(0)\perp F\operatorname{Ker}(0)$. %On the other hand, strong duality always holds if a system is functionally observable, but not necessarily if the system is output controllable. 
The following corollaries provide sufficient conditions for the strong duality principle.

\begin{cor} 
A pair of systems $(C,A;F)$ and $(A^\transp,C^\transp;F)$ is strongly dual (i.e., $\mathcal{O}_F = \mathcal{C}_F$) if $(C,A;F)$ is functionally observable.
\label{cor.functobsvstrongdual}
\end{cor}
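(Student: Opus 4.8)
The plan is to derive this directly from the weak duality of Theorem~\ref{theor.weakduality} together with the definition of functional observability, so the argument is short. First I would record the hypothesis: since $(C,A;F)$ is functionally observable, by definition $\mathcal{O}_F = \R^r$. Theorem~\ref{theor.weakduality} gives the unconditional inclusion $\mathcal{O}_F \subseteq \mathcal{C}_F$, and $\mathcal{C}_F \subseteq \R^r$ holds trivially because $\mathcal{C}_F$ is a subset of the target space $\R^r$. Chaining these, $\R^r = \mathcal{O}_F \subseteq \mathcal{C}_F \subseteq \R^r$, so all inclusions are equalities and in particular $\mathcal{O}_F = \mathcal{C}_F$, which is precisely the strong duality condition as defined in the paper.

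Alternatively, I would verify the geometric criterion of Theorem~\ref{theor.strongduality} directly. From Eq.~\eqref{eqn:OfCfrelation} we have $\mathcal{O}_F = \mathcal{C}_F \cap (F\operatorname{Ker}(W))^{\perp}$. Functional observability forces $\mathcal{O}_F = \R^r$, which is only possible if $(F\operatorname{Ker}(W))^{\perp} = \R^r$, i.e. $F\operatorname{Ker}(W) = \{0\}$. Then the orthogonality requirement $F\operatorname{Im}(W) \perp F\operatorname{Ker}(W)$ holds vacuously, and Theorem~\ref{theor.strongduality} yields $\mathcal{O}_F = \mathcal{C}_F$. I would likely present only the first route in the text and perhaps mention this second observation as a remark, since it also exhibits $F\operatorname{Ker}(W)=\{0\}$ explicitly.

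I do not anticipate any real obstacle: the corollary is an immediate consequence of the weak duality inclusion combined with the fact that functional observability saturates $\mathcal{O}_F$ to the entire space $\R^r$. The only point deserving a line of care is the logical step that $\mathcal{O}_F = \R^r$ and $\mathcal{O}_F \subseteq \mathcal{C}_F \subseteq \R^r$ together force $\mathcal{O}_F = \mathcal{C}_F$, and that this set equality is exactly the definition of ``strongly dual'' used here (equivalently, $\gamma = \dim(\mathcal{C}_F) - \dim(\mathcal{O}_F) = 0$).
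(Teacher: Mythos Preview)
Your proposal is correct. Your second route is essentially the paper's proof: the paper also shows that functional observability forces $F\operatorname{Ker}(W)=\{0\}$ (deriving it from the rank condition~\eqref{eq.functobsv} via $\operatorname{row}(F)\subseteq\operatorname{row}(\mathcal O)$ and $\operatorname{Ker}(W)=\operatorname{Ker}(\mathcal O)$, rather than via Eq.~\eqref{eqn:OfCfrelation} as you do), and then notes that the orthogonality condition of Theorem~\ref{theor.strongduality} is vacuous. Your first route is a genuinely more elementary alternative: it bypasses Theorem~\ref{theor.strongduality} entirely, using only the weak-duality inclusion $\mathcal{O}_F\subseteq\mathcal{C}_F$ from Theorem~\ref{theor.weakduality} together with $\mathcal{O}_F=\R^r$. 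This is shorter and makes the corollary an immediate consequence of Theorem~\ref{theor.weakduality} alone; the paper's route has the slight advantage of explicitly exhibiting $F\operatorname{Ker}(W)=\{0\}$, which connects to the geometric picture and the duality-gap discussion.
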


\begin{proof} Since $(C,A;F)$ is functionally observable, condition \eqref{eq.functobsv} is satisfied. Hence, $\operatorname{row}(F)\subseteq \operatorname{row}(\mathcal O)$ and $F\operatorname{Ker}(\mathcal O) = \{ 0 \}$, where $\operatorname{row}(\cdot)$ denotes the row space of a matrix. Given that $\operatorname{Im}(W) = \operatorname{Im}(\mathcal O)$, it follows that the strong duality condition in Theorem~\ref{theor.strongduality} holds trivially. \end{proof}

\begin{cor} \label{cor.conformal}
\rev{A pair of systems $(C,A;F)$ and $(A^\transp,C^\transp;F)$ is strongly dual if $\bm z = F\bm x$ is a conformal linear transformation.}
\end{cor}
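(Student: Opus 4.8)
The plan is to deduce the statement directly from the strong-duality criterion of Theorem~\ref{theor.strongduality}: the pair $(C,A;F)$ and $(A^\transp,C^\transp;F)$ is strongly dual if and only if $F\operatorname{Im}(W)\perp F\operatorname{Ker}(W)$. So the entire task reduces to verifying this single orthogonality relation under the hypothesis that $\bm z=F\bm x$ is a conformal linear transformation, and the only real content is translating ``conformal'' into a usable algebraic form.

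The first step is that translation: a linear map is conformal exactly when it preserves angles and is nondegenerate, i.e.\ when it is a nonzero scalar multiple of an orthogonal transformation. Hence $F\in\R^{n\times n}$ (in particular $r=n$) and $F^\transp F=c^2 I_n$ for some $c\neq 0$, equivalently $\langle F\bm u,F\bm v\rangle=c^2\langle\bm u,\bm v\rangle$ for all $\bm u,\bm v\in\R^n$. The claim $r=n$ deserves a word: a conformal map must be injective, which rules out $r<n$, and combined with the standing assumption $r\le n$ this forces $r=n$. The second step is the elementary fact that $W=\int_0^{t_1}e^{A^\transp\tau}C^\transp Ce^{A\tau}\,d\tau$ is symmetric, so $\operatorname{Im}(W)\perp\operatorname{Ker}(W)$ (the same fact invoked in Remark~\ref{rem.classicalduality}). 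Combining the two: for any $\bm u\in\operatorname{Im}(W)$ and $\bm v\in\operatorname{Ker}(W)$ we have $\langle F\bm u,F\bm v\rangle=c^2\langle\bm u,\bm v\rangle=0$, i.e.\ $F\operatorname{Im}(W)\perp F\operatorname{Ker}(W)$; by Theorem~\ref{theor.strongduality}, $\mathcal O_F=\mathcal C_F$ and the pair is strongly dual.

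I do not expect a genuine obstacle inside the argument, which is a one-line consequence of Theorem~\ref{theor.strongduality} once the definition is pinned down; the subtle point is precisely that pinning down. It is worth emphasizing that the relevant reading is $F=cQ$ with $Q$ orthogonal and $c\neq 0$, so that the corollary strictly generalizes the case $F=I_n$ of Remark~\ref{rem.classicalduality}, and that the superficially similar but strictly weaker ``co-isometry'' condition $FF^\transp=c^2 I_r$ with $r<n$ is \emph{not} sufficient: e.g.\ with $A=0$, $C=[1\,\,1]$, $F=[1\,\,0]$ one has $FF^\transp=1$, yet $(A^\transp,C^\transp;F)$ is output controllable while $(C,A;F)$ is not functionally observable, so $\mathcal O_F\neq\mathcal C_F$. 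Thus the conformality hypothesis is used in an essential way, specifically through the identity $\langle F\bm u,F\bm v\rangle=c^2\langle\bm u,\bm v\rangle$ on \emph{all} of $\R^n$ rather than merely on $\operatorname{Ker}(F)^\perp$.
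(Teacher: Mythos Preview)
Your proof is correct and follows essentially the same line as the paper's: both invoke Theorem~\ref{theor.strongduality}, note that $\operatorname{Im}(W)\perp\operatorname{Ker}(W)$ by the symmetry of $W$, and use that a conformal linear map preserves this orthogonality. Your version is more explicit---spelling out the algebraic characterization $F^\transp F=c^2 I_n$ (hence $r=n$) and adding a counterexample showing the weaker co-isometry condition $FF^\transp=c^2 I_r$ fails---but the core argument coincides with the paper's.
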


\begin{proof}
\rev{
    By definition, $F$ represents a conformal linear transformation if and only if the transformation preserves the angles between any two vectors, i.e.,
$
        \frac{\langle \bm x, \tilde{\bm x} \rangle}{|\bm x||\tilde{\bm x}|}
        =
        \frac{\langle F\bm x, F\tilde{\bm x} \rangle}{|F\bm x||F\tilde{\bm x}|}
$
    holds for all $\bm x,\tilde{\bm x}\in\R^n$. Since $\Im(W)\perp \operatorname{Ker}(W)$, the orthogonality $F\Im(W)\perp F\operatorname{Ker}(W)$ is preserved.
}
\end{proof}

%Corollary that all systems with normal matrix $A$ are strongly dual?

%---------------------------------------------------------------------------------

\subsection{Example}
\label{sec.dualityex}

Consider the 5-dimensional dynamical system given by
\begin{equation}
    A =
    \begin{bmatrix}
    -1 & 0 & 0 & 0 & -1 \\
    -1 & a_{22} & 0 & 0 & 0 \\
    0 & 0 & a_{33} & 0 & 0 \\
    0 & 0 & 0 & a_{44} & 0 \\
    0 & -1 & -1 & -1 & -1
    \end{bmatrix},
    \,\,\,
    \begin{matrix}
    C =
    \begin{bmatrix}
    0 & 0 & 0 & 0 & 1    
    \end{bmatrix}, \\ \\
    F =
    \begin{bmatrix}
    0 & 1 & 0 & 0 & 0    
    \end{bmatrix}. \\
    \end{matrix}
    \label{eq.pairACexample}
\end{equation}
%
% and the target vector \eqref{eq.target} defined by
% %
% \begin{equation}
% F =
% \begin{bmatrix}
%     0 & 1 & 0 & 0 & 0
% \end{bmatrix}.
% \label{eq.Fexample}
% \end{equation}
%
Let $a_{22}\neq 0$ and $a_{33}=a_{44}=0$ except when stated otherwise. Fig.~\ref{fig.graph} provides a graph representation of the system $(C,A;F)$ and its dual $(A^\transp,C^\transp;F)$. The observability matrix is given by

\begin{equation*}
    \mathcal O =
    \begin{footnotesize}
    \left[\begin{array}{*{5}{c@{\hspace{4pt}}}}
        0 & 0 & 0 & 0 & 1 \\
        0 & -1 & -1 & -1 & -1 \\
        1 & -a_{22}+1 & 1 & 1 & 1 \\
        a_{22}-2 & -a_{22}^2+a_{22}-1 & -1 & -1 & -2 \\
        a_{22}^2-2a_{22}+3 & -a_{22}^3 + a_{22}^2 - a_{22} + 2 & 2 & 2 & - a_{22}+4
    \end{array}\right]
    \end{footnotesize}
    .
\end{equation*}
Note that $(C,A;F)$ is not full-state observable ($\operatorname{rank}(\mathcal O)<5$) but is functionally observable given that $\operatorname{row}(F)\subseteq\operatorname{row}(\mathcal O)$.
%where $\operatorname{row}(\cdot)$ denotes the row space of a matrix. 
%Thus, although it is not possible to reconstruct the full-state vector $\bm x = [x_1, \,\, \ldots \,\, x_5]^\transp$ from measurements $\bm y(t) = x_5(t)$ over $t\in[0,t_1]$, the target variable $\bm z = x_2$ can be reconstructed in finite time.
%
%
% 
Therefore, following Theorem \ref{theor.weakduality}, $(A^\transp,C^\transp;F)$ is output controllable. %This can be directly verified by noticing that condition \eqref{eq.outputctrb} is indeed satisfied (where $\mathcal C = \mathcal O^\transp)$.
The converse relation, however, is not always true: output controllability of a system is not sufficient for the functional observability of the dual system. By considering $a_{22} = 0$ (equivalent to the absence of a self-edge in node 2 in Fig.~\ref{fig.graph}), it can be verified that $(A^\transp,C^\transp;F)$ remains output controllable, whereas $(C,A;F)$ loses functional observability. This is a consequence of the controllable and observable spaces defined by the Gramian.
Recall that $\operatorname{Im}(W) = \operatorname{Im}(\mathcal O) = \operatorname{Im}(\mathcal C^\transp)$. For $a_{22} = 0$, $F\operatorname{Im}(W) = F\operatorname{Ker}(W) = \operatorname{span}\{1\}$, where $\operatorname{span}\{\cdot\}$ is the span of a set of vectors.
From Theorem \ref{theor.strongduality}, strong duality does not hold and $(C,A;F)$ is not functionally observable. Indeed, strong duality is possible only if $a_{22}\neq 0$, where $F\operatorname{Ker}(W)= \{ 0 \}$ and thus $F\operatorname{Im}(W)\perp F\operatorname{Ker}(W)$. 
%As we have shown earlier, for $a_{22}\neq 0$, the dual system $(A^\transp, C^\transp;F)$ is output controllable and $(C,A;F)$ is functionally observable, therefore establishing the strong (and weak) duality between both properties.

\begin{figure}
    \centering
    \includegraphics[width=0.83\columnwidth]{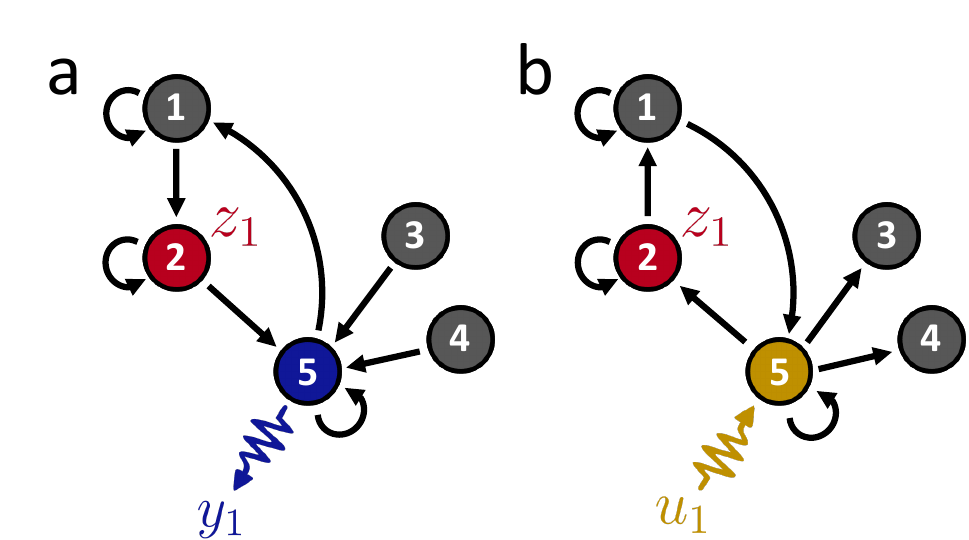}
    \caption{
    \label{fig.graph} Graph representation of a pair of dual systems.
    (a)~System $(C,A;F)$. %, defined by matrices \eqref{eq.pairACexample} and \eqref{eq.Fexample}.
    (b)~Dual system $(A^\transp,C^\transp;F)$.
    In each graph, nodes $i=1,\ldots,n$ represent the state variables $x_i$ and an edge is drawn from node $j$ to node $i$ if $a_{ij}\neq 0$. Note that transposing $A$ in the dual system is equivalent to reversing the direction of the edges. 
    The input, output, and target variables are colored in yellow, blue, and red, respectively. }
\vspace{-0.5cm}
\end{figure}

%---------------------------------------------------------------------------------
\subsection{Target control and observation energy in networks}
\label{sec.examp.network}

\rev{
In applications to large-scale networks, testing the output controllability and functional observability of a system using the algebraic rank conditions \eqref{eq.outputctrb} and \eqref{eq.functobsv}, respectively, can be prone to numerical challenges due to the poor conditioning of the controllability and observability matrices for large $n$. To circumvent these challenges, generic notions of output controllability \cite{Waarde2017,Czeizler2018,commault2019functional,li2020structural,Shali2022} and functional observability \cite{Montanari2022,zhang2023functional} for \textit{structured systems} have been proposed in the literature.
Such studies established intuitive graph-theoretic conditions for these properties based on the network structure, the set of actuators (or sensors), and the set of targets\textemdash respectively encoded by matrices $A$, $B$ (or $C$), and $F$. This approach has led to computationally efficient algorithms for the minimum placement of actuators and sensors in large-scale networks \cite{Montanari2023target}, and is particularly suitable for systems involving modeling uncertainties in the parameters (when the structure of $A$ is reliably known but the exact numerical entries are not).}
%
%We have recently investigated the output controllability and functional observability problems from a structural viewpoint \cite{Montanari2023target}. This allowed us to leverage duality to repurpose existing algorithms for minimum actuator/sensor placement to address previously unsolved problems.
% 
However, in addition to verifying whether a system is output controllable or functionally observable, it is important to address the question of how ``hard'' it is to control or observe the target vector $\bm z(t)$. For practical purposes, and in accordance with the existing literature \cite{Pasqualetti2013,Summers2016,Casadei2020}, we measure this hardness in terms of energy. We propose the following measures of the control (observation) energy required to drive (observe) a target vector. %Yan2012

%---------------------------------------------------------------
\subsubsection{Target control energy}

Consider an output controllable system $(A,B;F)$. Let $\bm u(t)$ be determined by Eq.~\eqref{eq.statedependentcontrol}, which is the control signal that requires the smallest amount of energy $\int_0^{t_1} \norm{\bm u(t)}^2 {\rm d}t$ to drive a target vector from the initial state $\bm z(0) := F\bm x(0) = 0$ to a given final state $\bm z^* := F\bm x^*$, where $\bm x^*:=\bm x(t_1)$. We refer to $\bm z^*$ for which the minimum energy is the largest as the \textit{worst-case scenario} and the corresponding energy as the maximum \textit{target control energy} \cite{Casadei2020}:
% In a  \textit{worst-case scenario}, the minimal amount of energy in the control signal $\bm u(t)$ required to drive the target  state to a desired final state $\bm z(t_1) = F\bm x(t_1)$ is determined by the maximum \textit{target control energy} \cite{Casadei2020}:
%
\begin{equation} \label{eq.controlenergy}
        E_{\rm tc} :=
        \max_{\norm{F\bm x^*} = 1}   \int_0^{t_1} \norm{\bm u(t)}^2 {\rm d}t = \frac{1}{\lambda_{\rm min}(FW_cF^\transp)}, 
\end{equation}
where $\lambda_{\rm min}(\cdot)$ denotes the minimum eigenvalue of a matrix. This expression can be derived by considering Eq. \eqref{eq.statedependentcontrol} for the initial state $\bm x(0) = 0$, which leads to $\int_0^{t_1} \norm{\bm u(t)}^2 {\rm d}t = (\bm z^*)^\transp (FW_cF^\transp)^{-1}\bm z^* \leq \lambda_{\rm max}\left((FW_cF^\transp)^{-1}\right) \norm{\bm z^*}$.
%
% \begin{equation}
% \begin{aligned}
%     \int_0^{t_1} \norm{\bm u(t)}^2 {\rm d}t &= (\bm z^*)^\transp (FW_cF^\transp)^{-1}\bm z^* \\
%         &\leq \lambda_{\rm max}\left((FW_cF^\transp)^{-1}\right) \norm{\bm z^*}.
% \end{aligned}
% \end{equation}
%
%
%\noindent
By imposing the constraint $\norm{F\bm x^*}=1$, Eq. \eqref{eq.controlenergy} corresponds to the direction in the row space of $F$ that is the hardest to control. 
If the system is not output controllable, then $FWF^\transp$ is singular and hence $\int_0^{t_1} \norm{\bm u(t)}^2 {\rm d}t$ is undefined. %This possibility is accounted for by Eq. \eqref{eq.controlenergy} since $\lambda_{\rm min}(FWF^\transp) = 0$ and hence target control is unfeasible in this case (i.e., $E_{\rm tc} = \infty$).

\begin{figure*}
    \centering
    \includegraphics[width=0.85\linewidth]{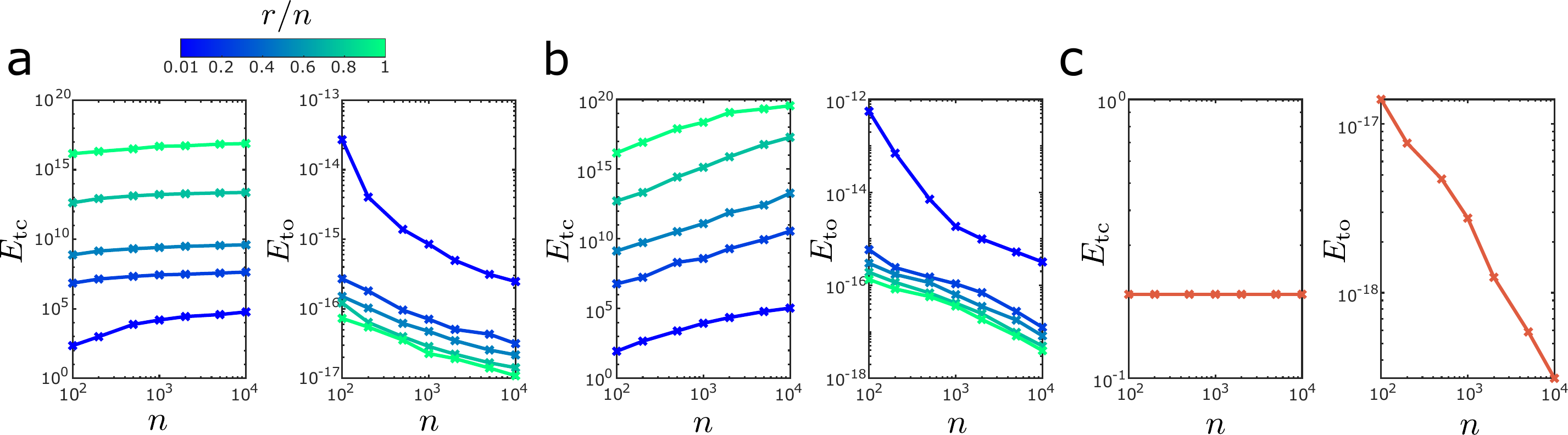}
    \caption{\rev{Target control energy $E_{\rm tc}$ and target observation energy $E_{\rm to}$ as a function of the network size $n$ in (a) scale-free networks, (b) small-world networks, and (c) chain networks. Each data point represents the average energy across 100 independent realizations of the network models.
    For each realization, the state variables $\bm x_i$ representing nodes in the network are actuated independently in the left panel (i.e., $B\in\R^{n\times p}$ has only one nonzero entry per column) and measured independently in the right panel (i.e., $C\in\R^{q\times n}$ has only one nonzero entry per row). 
    In (a) and (b), the target variables are also selected independently (i.e., $F\in\R^{r\times n}$ has only one nonzero entry per row), and the nonzero entries of $F$, $B$, and $C$ are selected randomly. The curves are color coded according to the fraction of target nodes $r/n$, for $p=q=0.1n$. 
    In (c), we set $p=q=r=1$, where only the first node is actuated in the left panel (i.e., $B = [1 \,\, 0 \,\, 0 \,\, \ldots \,\, 0]^\transp$) and measured in right panel (i.e., $C = B^\transp$). The target variable is defined as $z = \sum_{i=1}^n x_i$ (i.e., $F = [1 \,\, 1 \,\, 1 \,\, \ldots \,\, 1]$).
    In all simulations, the Gramian $W$ is computed using the system matrix $A = -(L + \alpha I_n)$, where $L$ represents the Laplacian matrix of the (symmetric) adjacency matrix $G$, $\alpha=0.1$ is set to ensure numerical stability of the matrices for large systems, and $G_{ij}\sim\mathcal U[0,1]$ if node $j$ is connected to $i$ and $G_{ij}=0$ otherwise. The scale-free networks were generated using the Barabási–Albert model, where each new node is preferentially attached to 3 existing nodes, and the small-world networks were generated using the Newman–Watts model, where the probability of adding a new edge to a ring graph is 0.2 \cite{Newman2010}.
    }} 
    \vspace{-0.55cm}
    \label{fig.complexnets}
\end{figure*}

%---------------------------------------------------------------
\subsubsection{Target observation energy}

Consider a functionally observable system $(C,A;F)$. We propose a measure to quantify the energy contribution of the initial target state $\bm{z}^* := F\bm{x}^*$ to the output signal, where $\bm{x}^* := \bm{x}(0)$ represents the initial state to be reconstructed. For the worst-case scenario, we call this measure the minimum \textit{target observation energy}:
\begin{equation} \label{eq.obsvenergy}
        E_{\rm to} :=
        \min_{\norm{F\bm x^*} = 1}  \int_0^{t_1} \norm{\bm y(t)}^2 {\rm d}t = \frac{1}{\lambda_{\rm max}(FW_o^\dagger F^\transp)},
\end{equation}
\noindent
where we assume that $A$ is Hurwitz stable so that the output signal is bounded.
To solve the optimization \eqref{eq.obsvenergy}, consider the Lagrangian $\mathcal L(\bm x^*,\lambda) =  \int_0^{t_1} \norm{\bm y(t)}^2{\rm d}t + \lambda(\norm{F\bm x^*}^2 - 1)$, where $\lambda$ is the Lagrangian multiplier and $\int_0^{t_1} \norm{\bm y(t)}^2{\rm d}t = (\bm x^*)^\transp W_o \bm x^*$. The critical points $({\bm x}^*, {\lambda})$ of $\mathcal L$ are given by
\begin{equation} \label{eq.lagrangiangrad}
        \gradient_{\bm x^*} \mathcal L(\bm x^*,\lambda) = (W_o - \lambda F^\transp F){\bm x}^* = 0.
\end{equation}
\noindent
Evaluating the cost function at $({\bm x}^*, {\lambda})$ yields $\int_0^{t_1} \norm{\bm y(t)}^2{\rm d}t = ({\bm x}^*)^\transp W_o {\bm x}^* = ({\bm x}^*)^\transp \lambda F^\transp F {\bm x}^* = \lambda \norm{F{\bm x}^*}^2 = \lambda$, where we imposed the constraint $\norm{F\bm x^*}=1$. 
From Eq.~\eqref{eq.lagrangiangrad}, it follows that the Lagrangian multiplier $\lambda$ is an eigenvalue of the matrix pencil $W_o-\lambda F^\transp F$, also denoted $(W_o,F^\transp F)$. Thus, $E_{\rm to} = \lambda_{\rm min}(W_o,F^\transp F)$, which is the smallest eigenvalue of the matrix pencil $(W_o,F^\transp F)$. Multiplying Eq. \eqref{eq.lagrangiangrad} on the left by $G$ and recalling that $GW_o = F$, we obtain $\left(GF^\transp - \frac{1}{\lambda} I_r \right)\bm z_0 = 0$.
Therefore, $E_{\rm to} = \lambda_{\rm min}(W_o,F^\transp F) = \left[\lambda_{\rm max}(GF^\transp)\right]^{-1}$, where $G = FW_o^\dagger$ and $W_o^\dagger$ denotes the right pseudoinverse of $W_o$ (i.e., $W_oW_o^\dagger = I_n$).
Note that if the system is not functionally observable, there does not exist a matrix $G$ such that $GW_o = F$ and hence $E_{\rm to}$ is undefined.

Small $E_{\rm to}$ implies a small contribution of $\bm z^*$ to the output energy $\int_0^{t_1} \norm{\bm y(t)}^2 {\rm d}t$, which may be obscured by practical limitations (e.g., numerical errors) and lead to poor target estimation. To see this, consider the sequence of measurements $\bm y(k)$, $k \in \{0,\Delta t, 2\Delta t, \ldots, T\}$, where $\Delta t$ is the sampling time and $T$ is the final time. Let $[\bm y(0)^\transp \,\, \bm y(\Delta t)^\transp \,\, \ldots \,\, \bm y(T)^\transp ]^\transp  = [C^\transp  \,\, (Ce^{A\Delta t})^\transp  \,\, \ldots \,\, (Ce^{AT})^\transp ]^\transp \bm x(0) =: \Psi \bm x(0)$. Since the system is functionally observable,  $\bm z(0)$ can be estimated from the least-square problem $[\bm y(0)^\transp  \ldots  \bm y(T)^\transp ]^\transp = \Phi\bm z(0)$, where %$(\Psi - \Phi F)\bm x(0) = 0$ and thus 
$\Phi = F\Psi^\dagger$. The estimation accuracy of $\bm z(0)$ thus depends on the condition number of $\Phi$, determined by $\kappa := \sqrt{\lambda_{\rm max}(\Phi\Phi^\transp)/\lambda_{\rm min} (\Phi\Phi^\transp)}.$
%which is related to $\lambda_{\rm max}(\Phi\Phi^\transp)$. 
It follows that $\Phi\Phi^\transp = F\Phi^\dagger(\Phi^\dagger)^\transp F^\transp =  F(\Phi\Phi^\transp)^\dagger F = F(\sum_{k}e^{A^\transp k\Delta t}C^\transp C e^{Ak\Delta t})^\dagger F^\transp \approx FW_o^\dagger F^\transp$ for a sufficiently small $\Delta t$. This reveals a relation between the energy $E_{\rm to}$, %$=\lambda^{-1}_{\rm max}(FW_o^\dagger F^\transp)$,
the condition number $\kappa$, and the estimation accuracy of $\bm z(0)$.

% \begin{rem} 
%     \rev{By imposing the constraint $\norm{F\bm x^*}=1$, the worst-case energies in Eqs. \eqref{eq.controlenergy} and \eqref{eq.obsvenergy} correspond to the directions in the row space of $F$ that are the hardest to control and observe, respectively. 
%     %
%     For the special case of $F=I_n$, the target control energy reduces to the worst-case controllability measure $E_{\rm tc} = \left[\lambda_{\rm min}(W_c)\right]^{-1}$ studied in past work on full-state control \cite{Pasqualetti2013,Sun2013,Summers2016}; similarly, the target observation energy reduces to $E_{\rm to} = \left[\lambda_{\rm max}(W_o^{-1})\right]^{-1} = \lambda_{\rm min}(W_o)$.} \label{rem.fullstatecontrolenergy}
% \end{rem}

\vspace{-0.1cm}
\begin{rem} \label{rem.energyduality}
    \rev{The  measures $E_{\rm tc}$ and $E_{\rm to}$ are not directly related in general, despite the equivalence between the controllability and observability Gramians ($W_c=W_o=:W$). As a special case, when $F$ has orthonormal rows (i.e., $FF^\transp = I_r$), we have} $F^\transp = F^\dagger$ and $FW^\dagger F^\transp = (F^\transp)^\dagger W^\dagger F^\dagger = (FWF^\transp)^\dagger$. 
    By assumption, $E_{\rm tc}$ and $E_{\rm to}$ are defined respectively for output controllable and functionally observable systems. Thus, $FWF^\transp$ is nonsingular and $(FWF^\transp)^\dagger = (FWF^\transp)^{-1}$, yielding $E_{\rm to} = E_{\rm tc}^{-1}$.
    This case suggests that the degree of difficulty to observe a system goes hand in hand with the difficulty to control the dual system. However, such a simple relation is not expected for a general pair of dual systems, as illustrated in the example below. For the previously considered case of $F=I_n$ \cite{Pasqualetti2013,Summers2016}, we obtain the energy measures $E_{\rm tc} = \left[\lambda_{\rm min}(W_c)\right]^{-1}$ for full-state controllability and $E_{\rm to} = \left[\lambda_{\rm max}(W_o^{-1})\right]^{-1} = \lambda_{\rm min}(W_o)$ for full-state observability. %Sun2013
    
    %it can be as hard to observe a system $(C,A;F)$ as it is to control the dual system $(A^\transp,C^\transp;F)$.}
    %$\int_0^{t_1}\norm{\bm u(t)}{\rm d}t$ and minimizes $\int_0^{t_1}\norm{\bm y(t)}{\rm d}t$ because the optimization problems themselves are dual (in the  sense used in optimization theory \cite{}).}
\end{rem}

%---------------------------------------------------------------
\subsubsection{Example}

Fig. \ref{fig.complexnets}a,b presents the scaling of the  energy measures for scale-free and small-world networks of increasing size. The target control (observation) energy increases (decreases) on average with the network size $n$. As expected, the increase rate depends on the network structure and the proportion of target variables. 
%
%Since the control and output energy are limited in practical applications, it is usually unfeasible to control (reconstruct) the target state of networks associated with small (large) eigenvalues of $FW_cF^\transp$ ($FW_o^\dagger F^\transp$). 
Nonetheless, the energy required to drive a \textit{target} vector is much smaller than the energy required to drive the \textit{full-state} vector when $r\ll n$. %the number of target nodes is small compared to the network size ($r\ll n$). 
Likewise, the output energy retains much more information of the target state than of the full state, as shown by larger  $E_{\rm to}$ for $r\ll n$. 
%
%This illustrates some of the advantages of an output controllability/functional observability approach to control/estimation problems in large-scale systems.
%

The results in Fig. \ref{fig.complexnets}a,b are generated for a random selection of actuated, measured, and targeted variables in the network. In this case, the anticorrelated trend in the dependence of $E_{\rm tc}$ and $E_{\rm to}$ with $n$ is qualitatively similar to the example discussed in Remark \ref{rem.energyduality}, even though the columns of $F$ are generally non-orthonormal.
This relationship is not guaranteed to hold when actuators/sensors are placed optimally (rather than randomly) in the network. An example of the latter is given in Fig. \ref{fig.complexnets}c for chain networks of increasing size and specific choices of actuator, sensor, and target nodes. In this case, the energy $E_{\rm to}$ still decreases on average as $n$ increases whereas $E_{\rm tc}$ remains constant for all $n$.
This demonstrates that the cost of target control and estimation methods can be further optimized through the placement of actuators and sensors in the network (as previously explored for target control \cite{Casadei2020,li2020structural}). 
%
%Based on these results, we suggest that the development of cost-effective algorithms for the optimal sensor placement for functional observability (which is currently an open problem) should be possible by leveraging existing algorithms for the optimal actuator placement for target controllability \cite{Casadei2020,li2020structural}.

%=======================================================================
\section{Duality and Target Control}
\label{sec.separation}

Full-state controllability is a necessary and sufficient condition %not only for the control of the system state $\bm x$ from any initial condition to any final condition, but also 
for the design of \textit{static feedback} control systems with arbitrary poles. 
On the other hand, its generalized counterpart, output controllability, has only been shown to be a necessary and sufficient condition for the control of the target vector $\bm z$ %from any initial condition to any final condition 
via the open-loop control law \eqref{eq.statedependentcontrol} (based on the Gramian) \cite{Lazar2020,Casadei2020}. Although optimal in terms of control energy, such control law is prone to parameter uncertainties and disturbances, and there is no known relation between output controllability and the design of a control system with {static feedback} $\bm u = -K\bm x$. %This severely undermines the notion of output controllability as a condition for target control applications in closed-loop systems (e.g., stabilization, regulation, and tracking problems with respect to part of the state vector \cite{Vorotnikov2005}). 

In this section, we apply the duality principle to obtain a necessary and sufficient condition for \textit{target control via static feedback}.
Specifically, we show that, given a system $(A,B;F)$, there exists a static feedback matrix $K\in\R^{p\times n}$ and an input $\bm u = -K\bm x$ such that the closed-loop system 
\begin{equation}
    \dot{\bm x} = (A-BK)\bm x    
\label{eq.closedloop}
\end{equation}
can be controlled with respect to the target subspace \eqref{eq.target} if and only if the \textit{dual} system $(B^\transp,A^\transp;F)$ is functionally~observable.
In the classical pole placement problem, full-state controllability ensures that all the closed-loop poles (eigenvalues of $A-BK$) can be arbitrarily placed via static feedback. %This allows the full-state vector $\bm x(t)$ to be steered with a pre-specified dynamical response (in terms of transient characteristics, such as settling time, rise time, and transient oscillations). 
Here, instead, we solve a \textit{partial} pole placement problem in which the control goal is to design a static feedback that arbitrarily places only the subset of closed-loop poles $\Sigma$ (defined below) that directly influence the time response characteristics of $\bm z(t)$. %This allows the design of a control signal $\bm u(t)$ that stabilizes (or destabilizes) the target vector $\bm z(t)$ with a pre-specified response.

Before stating the theorem, we define the eigenpair $(\lambda_i,\bm v_{ij})$ corresponding to an eigenvalue $\lambda_i$ of $A$ and its left eigenvector $\bm v_{ij}\in\C^n$, for $j=1,\ldots,k_i$, where $k_i$ is the geometric multiplicity of $\lambda_i$. Let $V_{i}\in\C^{n\times k_i}$ be an orthogonal basis of the eigenspace $\{\bm v\in\C^n \,| \, (A^\transp - \lambda_i I_n)\bm v \}$, $Q = F^\transp(FF^\transp)^{-1}F$ be the orthogonal projection onto $\operatorname{row}(F)$, and $|\mathcal J|$ be the number of elements in a set $\mathcal J$. %, and $r_i = \rank(QV_i)$.

\begin{thm}[Target control via static feedback] \label{thm.stroutctrl}
Consider a system $(A,B;F)$ with static feedback $\bm u = -K\bm x$. There exists a feedback matrix $K$ such that every eigenpair in
\begin{equation}
\begin{aligned}
    \Sigma = \bigl\{(\lambda_i,\bm v_{ij}) \, | \,& Q\bm v_{ij}\neq 0, \, j\in \mathcal J_i \subseteq \{1,\ldots, k_i\}, \, \\  &|\mathcal J_i| = \rank(QV_i) \bigr\}
\end{aligned}
\end{equation}
\noindent
can be arbitrarily placed in the closed-loop system \eqref{eq.closedloop} if and only if the dual system $(B^\transp,A^\transp;F)$ is functionally observable.
\end{thm}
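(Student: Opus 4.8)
The plan is to reduce the partial pole placement claim to a standard full-state pole-placement problem on an appropriate quotient/invariant subspace, using the strong duality principle (Theorem~\ref{theor.strongduality}) as the bridge between the ``functionally observable dual'' hypothesis and an output-controllability statement about $(A,B;F)$. First I would make precise what ``$(B^\transp,A^\transp;F)$ is functionally observable'' buys us. By the classical duality principle (Remark~\ref{rem.classicalduality}) applied at the structural level, functional observability of $(B^\transp,A^\transp;F)$ is governed by condition~\eqref{eq.functobsv} with the observability matrix of $(B^\transp,A^\transp)$ — which is exactly the transpose of the controllability matrix $\mathcal C = [B\ AB\ \cdots\ A^{n-1}B]$ of $(A,B)$. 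So the hypothesis is equivalent to $\operatorname{row}(F)\subseteq\operatorname{row}(\mathcal C^\transp) = \operatorname{Im}(\mathcal C^\transp)^\transp$, i.e. $\operatorname{row}(F)\subseteq\operatorname{Im}(\mathcal C)^\transp$... more usefully, it is equivalent to $F\operatorname{Ker}(\mathcal C^\transp)=\{0\}$, and by duality $\operatorname{Ker}(\mathcal C^\transp) = \langle A\,|\,\operatorname{Im}(B)\rangle^\perp$. Hence the hypothesis says precisely that $\operatorname{row}(F)$ lies in the controllable subspace $\langle A\,|\,\operatorname{Im}(B)\rangle$ of $(A,B)$ — equivalently $(A,B;F)$ is output controllable by~\eqref{eq.outputctrb}. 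This is the natural ``dual-functional-observability $\Leftrightarrow$ output controllability'' statement, and I would establish it as the first lemma-step.

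Next I would bring in the geometric decomposition. Decompose $\R^n$ via the Kalman controllability decomposition: in a suitable basis $A = \begin{bmatrix} A_{11} & A_{12}\\ 0 & A_{22}\end{bmatrix}$, $B = \begin{bmatrix}B_1\\0\end{bmatrix}$, with $(A_{11},B_1)$ controllable and the first block corresponding to $\mathcal R := \langle A\,|\,\operatorname{Im}(B)\rangle$. Under static feedback $A - BK$, the uncontrollable block $A_{22}$ (hence its spectrum, the uncontrollable modes) is invariant, while the controllable spectrum (eigenvalues of $A_{11}$) can be assigned arbitrarily — this is the classical pole-placement theorem. The content of Theorem~\ref{thm.stroutctrl} is then: (i) the modes that ``directly influence $\bm z(t)$'' and are assignable are exactly those captured by $\Sigma$, and (ii) the modes in $\Sigma$ are assignable iff $\operatorname{row}(F)\subseteq\mathcal R$. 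I would show that $\Sigma$ picks out, for each eigenvalue $\lambda_i$ of $A$, a maximal set of left eigenvectors whose projections onto $\operatorname{row}(F)$ are linearly independent ($|\mathcal J_i| = \operatorname{rank}(QV_i)$ with $Q\bm v_{ij}\neq 0$); these are precisely the eigen-directions along which the target $\bm z = F\bm x$ ``feels'' the mode $\lambda_i$, since $\bm z(t) = \sum F\bm x$-components built from $e^{\lambda_i t}\langle \bm v_{ij},\bm x(0)\rangle$-type terms (for the diagonalizable case; the Jordan case needs care). For the necessity direction, if some eigenpair in $\Sigma$ were unassignable, the corresponding mode would be an uncontrollable mode with $Q\bm v_{ij}\neq 0$, i.e. $\bm v_{ij}$ is a left eigenvector of $A$ in $\mathcal R^\perp$ with nonzero $F$-projection, contradicting $\operatorname{row}(F)\subseteq\mathcal R$ (PBH-type argument: $\bm v$ uncontrollable $\iff$ $\bm v^\transp B = 0$ and $A^\transp\bm v = \lambda\bm v$ $\iff$ $\bm v\in\mathcal R^\perp$). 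For sufficiency, if $\operatorname{row}(F)\subseteq\mathcal R$ then every left eigenvector with nonzero $Q$-projection must have a controllable component, and a counting argument with $|\mathcal J_i| = \operatorname{rank}(QV_i)$ shows exactly $\operatorname{rank}(QV_i)$ of the $\lambda_i$-eigendirections relevant to $\bm z$ are controllable and hence freely placeable via $K$; assembling over all $i$ (possibly using a multi-input pole-assignment result such as a Brunovsky-form / cyclic argument for the controllable subsystem) gives the feedback $K$.

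The main obstacle I expect is the Jordan-block / defective-eigenvalue bookkeeping: the set $\Sigma$ is phrased purely in terms of eigenvalues, left eigenvectors, and geometric multiplicities $k_i$, but ``arbitrarily placing'' an eigenpair when $\lambda_i$ has nontrivial Jordan structure — and sorting out which eigenpairs ``directly influence the time response of $\bm z(t)$'' versus which are hidden by the $F$-projection — requires carefully relating the eigen-directions $\bm v_{ij}$, the projection $Q$, and the controllable subspace $\mathcal R$ without assuming diagonalizability. I would handle this by first proving the statement cleanly in the diagonalizable case via the PBH/eigenvector characterization above, then arguing the general case either by a density/perturbation argument or by replacing eigenvectors with the appropriate invariant-subspace language (the $A$-invariant subspace generated by $\mathcal R^\perp$ and its interaction with $\operatorname{Ker}(Q)$). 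A secondary subtlety is the multi-input case ($p>1$): placing a \emph{subset} of poles while leaving the rest untouched needs the fact that within the controllable subsystem one has full freedom of spectrum assignment and that this can be done by feedback acting only through $B$ without disturbing the uncontrollable block — standard, but worth citing (e.g. \cite[Th.~6.5]{Chi-TsongChen1999} and the geometric treatment in \cite{wonham1979book}). The rest — verifying that $\Sigma$ as defined is exactly the assignable-and-$\bm z$-relevant set — is then a matter of linear-algebra bookkeeping that I would not grind through here.
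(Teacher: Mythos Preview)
Your overall strategy---Kalman controllability decomposition plus the characterization of functional observability of $(B^\transp,A^\transp;F)$ as $\operatorname{row}(F)\subseteq\langle A\,|\,\operatorname{Im}(B)\rangle$ (equivalently $F_u=0$ in the decomposed coordinates)---is exactly the paper's route. Two issues deserve attention.

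First, a minor but potentially misleading slip: the assertion that functional observability of $(B^\transp,A^\transp;F)$ is ``equivalently $(A,B;F)$ is output controllable by~\eqref{eq.outputctrb}'' is false. Only the forward implication holds (this is weak duality, Theorem~\ref{theor.weakduality}); output controllability requires merely $\operatorname{rank}(F\mathcal C)=r$, i.e.\ that $F$ restricted to $\mathcal R:=\langle A\,|\,\operatorname{Im}(B)\rangle$ be surjective onto $\R^r$, which is strictly weaker than $\operatorname{row}(F)\subseteq\mathcal R$. You correctly state and subsequently use the stronger inclusion, so the error does not propagate---but drop the ``equivalently,'' and note that the proof of Theorem~\ref{thm.stroutctrl} neither needs nor uses Theorems~\ref{theor.weakduality}--\ref{theor.strongduality}.

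Second, and more substantively: both of your argument sketches are the \emph{sufficiency} direction. Your paragraph labeled ``necessity'' assumes $\operatorname{row}(F)\subseteq\mathcal R$ and derives a contradiction from the existence of an unassignable eigenpair in $\Sigma$; that is sufficiency (functionally observable $\Rightarrow$ all of $\Sigma$ placeable). Your ``sufficiency'' paragraph then restates the same implication. The genuine necessity direction---if $\operatorname{row}(F)\not\subseteq\mathcal R$ (i.e.\ $F_u\neq 0$), then some eigenpair in $\Sigma$ is \emph{not} placeable---is missing. The paper supplies it via a dimension count: the uncontrollable left eigenvectors (those with $\bar{\bm v}^c=0$) span the $(n-m)$-dimensional subspace $\mathcal R^\perp$, and since $F_u\in\R^{r\times(n-m)}$ is nonzero with $\operatorname{rank}(F_u)\leq n-m$, at least one such $\bar{\bm v}$ satisfies $F_u\bar{\bm v}^u\neq 0$, hence $Q\bm v\neq 0$, hence $(\lambda,\bm v)\in\Sigma$ while being unassignable. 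You need to add this argument.

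On your Jordan-block worry: the theorem is phrased entirely in terms of left eigenvectors and geometric multiplicities $k_i$, so defectiveness does not enter directly. The paper handles repeated eigenvalues by choosing, within each eigenspace, a basis adapted to the controllable/uncontrollable splitting (each $\bar{\bm v}_{ij}$ has either $\bar{\bm v}^c_{ij}=0$ or $\bar{\bm v}^u_{ij}=0$) and then invoking the count $|\mathcal J_i|=\operatorname{rank}(QV_i)$. A density or perturbation argument is unnecessary.
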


\begin{proof}
Let $\mathcal C$ be the controllability matrix of $(A,B;F)$ and $\mathcal O = \mathcal C^\transp$ be observability matrix of $(B^\transp,A^\transp;F)$, and assume that $\operatorname{rank}(\mathcal C) = m < n$. 
We apply the following canonical decomposition \cite{Chi-TsongChen1999} to system $(A,B;F)$. Let $P^\transp = [p_1 \,\, \ldots \,\, p_n] \in\R^{n\times n}$ be a unitary matrix such that the first $m$ columns $\{p_1,\ldots, p_m\}$ lie in the column space of $\mathcal C$ and $\{p_{m+1},\ldots,p_n\}$ are arbitrarily chosen such that $P$ is nonsingular. \rev{Thus, $\bar{\bm x} := [\bm x_c^\transp \,\, \bm x_u^\transp]^\transp = P\bm x$ decomposes $\bm x$ into the controllable variables $\bm x_c\in\R^{m}$ and uncontrollable variables $\bm x_u\in\R^{n-m}$.} Applying this transformation to \eqref{eq.dynsys}--\eqref{eq.target} yields:
\begin{equation}
\begin{aligned}
     \begin{bmatrix}
         \dot{\bm x}_c \\ \dot{\bm x}_u
     \end{bmatrix}
     &=
     \rev{\underbrace{
     \begin{bmatrix}
         A_c & A_{12} \\ 0 & A_{u}
     \end{bmatrix}}_{\bar A}}
     \begin{bmatrix}
         {\bm x}_c \\ {\bm x}_u
     \end{bmatrix}
     +
     \rev{\underbrace{
     \begin{bmatrix}
         B_c \\ 0
     \end{bmatrix}}_{\bar B}}
     \bm u,
     \\
     \bm y &=
     \rev{\underbrace{
     \begin{bmatrix}
         C_c & C_u
     \end{bmatrix}}_{\bar C}}
     \begin{bmatrix}
         {\bm x}_c \\ {\bm x}_u
     \end{bmatrix},
     \,\,\, \text{and} \,\,\,
     \bm z =
     \rev{\underbrace{
     \begin{bmatrix}
         F_c & F_u
     \end{bmatrix}}_{\bar F}}
     \begin{bmatrix}
         {\bm x}_c \\ {\bm x}_u
     \end{bmatrix},
\end{aligned}
\label{eq.decomposedsys}
\end{equation}
\noindent
where $\bar A= PAP^\transp$, $\bar B = PB$, $\bar C = CP^\transp$,
and $\bar F = FP^\transp$.
Given that $[F_c \,\, F_u] = FP^{\transp}$, $F_u = 0$ if and only if $\operatorname{row}(F)\subseteq \operatorname{row}([p_1 \,\, \ldots \,\, p_m]^\transp) = \operatorname{row}(\mathcal C^\transp)$. Since $\mathcal O = \mathcal C^\transp$, this condition is equivalent to $\operatorname{row}(F)\subseteq \operatorname{row}(\mathcal O)$, which holds if and only if condition \eqref{eq.functobsv} is satisfied for a triple $(B^\transp,A^\transp;F)$. Thus, $F_u = 0$ if and only if $(B^\transp,A^\transp;F)$ is functionally observable.

Before proceeding with the rest of the proof, we first recall the subsystem $(A_c,B_c)$ is controllable and thus all eigenvalues (and eigenpairs) of $(A_c-B_c K_c)$ can be arbitrarily placed via static feedback $\bm u = -K\bm x$, where $KP^\transp = [K_c \,\, K_u]$ \cite[Th. 10]{Chi-TsongChen1999}.
%
%Given the similarity transformation $P$, 
For each eigenpair $(\lambda_i,\bm v_{ij})$ of $A$, there is a corresponding eigenpair $(\lambda_i,\bar{\bm v}_{ij})$ of $PAP^\transp$ sharing the same eigenvalue $\lambda_i$, where $\bar{\bm v}_{ij} = [(\bm v^c_{ij})^\transp \,\, (\bm v^u_{ij})^\transp]^\transp = P {\bm v}_{ij}$.
Under a transformation $P$, all elements of $\Sigma$ have a one-to-one correspondence to the elements of the subset of eigenpairs of $PAP^\transp$, defined as
%
% \begin{equation}
% \begin{aligned}
    $\bar\Sigma = \bigl\{(\lambda_i,\bar{\bm v}_{ij}) \, | \, QP^\transp\bm \bar{\bm v}_{ij}\neq 0, \, j\in \mathcal J_i \subseteq \{1,\ldots, k_i\}, \, |\mathcal J_i| = \rank(QV_i) \bigr\}$.
% \end{aligned}
% \end{equation}
%
%where $\bar V_i=PV_i$ is an orthogonal basis of the eigenspace $\{\bar{\bm v}\in\C^n \,| \, (PA^\transp P^\transp - \lambda_i I_n)\bar{\bm v} \}$. 
Note that, due to the structure of the decomposed system \eqref{eq.decomposedsys}, we can construct a set of eigenpairs $(\lambda_i,\bar{\bm v}_{ij})\in\bar\Sigma$ such that all eigenvectors $\bar{\bm v}_{ij}$ satisfy either $\bar{\bm v}^c_{ij}=0$ or $\bar{\bm v}^u_{ij}=0$.

Now we show that every eigenpair $(\lambda_i,\bar{\bm v}_{ij})\in\bar\Sigma$ can be arbitrarily placed if and only if $F_u = 0$. %(i.e., $(B^\transp,A^\transp;F)$ is functionally observable). 
The proof then follows from the fact that every $(\lambda_i,\bar{\bm v}_{ij})\in\bar\Sigma$ can be arbitrarily placed by some feedback matrix $KP^\transp$, which implies that every $(\lambda_i,{\bm v}_{ij})\in\Sigma$ can also be abitrarily placed by $K$.

\textit{Sufficiency.} Assuming $F_u = 0$, we show that any eigenpair $(\lambda_i,\bar{\bm v}_{ij}) \in \bar\Sigma$ can be arbitrarily placed when $\lambda_i$ has multiplicity $k_i=1$ and $k_i>1$. For non-repeated eigenvalues $\lambda_i$ ($k_i=1$), by definition $(\lambda_i,\bar{\bm v}_{i1})\in\bar\Sigma$ if $QP^\transp\bar{\bm v}_{i1} = F^\transp(FF^\transp)^{-1}
        [F_c \,\, F_u]
        [(\bm v^c_{i1})^\transp \,\, (\bm v^u_{i1})^\transp]^\transp
        \neq 0$.
%
% \begin{equation}
%         QP^\transp\bar{\bm v}_{i1} = F^\transp(FF^\transp)^{-1}
%         \begin{bmatrix}
%             F_c & F_u
%         \end{bmatrix}
%         \begin{bmatrix}
%             \bm v^c_{i1} \\ \bm v^u_{i1}
%         \end{bmatrix}
%         \neq 0.
% \end{equation}
%
\noindent
For any eigenpair $(\lambda_i,\bar{\bm v}_{i1})$ that cannot be arbitrarily placed, we have that $\bar{\bm v}_{i1}^c = 0$ and, since $F_u = 0$, it follows that $QP^\transp\bar{\bm v}_{i1}= 0$. Thus, any eigenpair $(\lambda_i,\bar{\bm v}_{i1})$ that satisfies $QP^\transp\bar{\bm v}_{i1}\neq 0$, and by definition belongs to $\bar\Sigma$, can be arbitrarily placed.

For each eigenvalue $\lambda_i$ with multiplicity $k_i>1$, since all eigenvectors $\bar{\bm v}_{ij}$ are linearly independent, the projected vectors $Q\bar{\bm v}_{ij}$ are also linearly independent due to the orthogonality of $Q$. Therefore, all $|\mathcal J_i|$ eigenvectors $\bar{\bm v}_{ij}$ in $\bar\Sigma$ are linearly independent under the projection $QP^\transp$. Let $\operatorname{diag}(I_m,0_{n-m})\bar{\bm v}_{ij}$ be a projection of the eigenvector $\bar{\bm v}_{ij}$ onto the controllable subspace. Since $F_u = 0$, then $\operatorname{row}(F) = %\operatorname{row}(\bar FP)=
\operatorname{row}([F_c \,\, 0_{r\times n-m}])\subseteq\operatorname{row}(\operatorname{diag}(I_m,0_{n-m}))$. Consequently, for each eigenvalue $\lambda_i$, all eigenvectors $\bar{\bm v}_{ij}$ in $\bar\Sigma$ are also linearly independent under the projection $\operatorname{diag}(I_m,0_{n-m})$. This implies that every $(\lambda_i,\bar{\bm v}_{ij})\in\bar\Sigma$ can be arbitrarily placed.

% For each eigenvalue $\lambda_i$ with multiplicity $k_i>1$, consider the set of eigenpairs $\Lambda_i^c$ that can be arbitrarily placed and the set of eigenpairs $\Lambda_i^u$ that cannot be arbitrarily placed.
% %
% Given the system decomposition \eqref{eq.decomposedsys}, it holds that, for all $(\lambda_i,\bar{\bm v}_{ij})\in\Lambda_i^u$, the vector $\operatorname{diag}(I_{m},0_{n-m})\bar{\bm v}_{ij}$ belongs to the span of $\{\operatorname{diag}(I_{m},0_{n-m})\bar{\bm v}_{ik} : (\lambda_i,\bar{\bm v}_{ik})\in\Lambda_i^c\}$,  where $\operatorname{diag}(I_{m},0_{n-m})$ is a projection onto the controllable subspace $\operatorname{row}(A_c)$. 
% %
% The set $\bar{\Sigma}$ further imposes that there exists a minimal (arbitrary) selection of $\rank(QP^\transp\bar V_i)$ eigenpairs $(\lambda_i,\bar{\bm v}_{ij})$ that satisfy $QP^\transp\bar{\bm v}_{ij}\neq 0$ and are linearly independent under the projection $QP^\transp$. 
% %
% Since $\operatorname{row}(F) = \operatorname{row}(\bar FP) = \operatorname{row}([F_c \,\, 0_{r\times n-m}])$, where $F_c\in\R^{r\times m}$, it follows that any set of eigenvectors $\bar{\bm v}_{ij}$ that are linearly independent under the projection $QP^\transp$ are also linearly independent under the projection $\operatorname{diag}(I_{m},0_{n-m})$, therefore belonging to the set of eigenpairs $(\lambda_i,\bar{\bm v}_{ij})\in\Lambda_i^c$.

\textit{Necessity.} By assuming $F_u\neq 0$, we show that there exists at least one $(\lambda_i,\bar{\bm v}_{ij})\in\bar\Sigma$ that cannot be arbitrarily placed. Recall that every eigenpair $(\lambda_i, \bar{\bm v}_{ij})$ that cannot be arbitrarily placed satisfies ${\bm v}_{ij}^c = 0$, yielding $QP^\transp\bm s = F^\transp(FF^\transp)^{-1}F_u{\bm v}_{ij}^u$. Note that $F_u\in\R^{r\times n-m}$ and hence $\rank(F_u)\leq n-m$. Given that the eigenpairs $(\lambda, \bar{\bm v}_{ij})$ that cannot be arbitrarily placed span an $n-m$ dimensional space, there exists at least one eigenpair $(\lambda,\bar{\bm v}_{ij})$ that cannot be arbitrarily placed and satisfies $F^\transp(FF^\transp)^{-1} F_u{\bm v}_{ij}^u \neq 0$, implying that $(\lambda,\bar{\bm v}_{ij})\in\bar\Sigma$.
\end{proof}

\begin{rem}
    Theorem~\ref{thm.stroutctrl} does not depend on a specific selection of eigenvectors of $A$ and hence holds for any feasible set $\Sigma$. If $A$ is diagonalizable, then $\bm z = F\sum_i^n \bm v_{i1}^\transp \bm x(0)\bm v'_{i1} e^{\lambda_i t}$, where $\bm v_{i1}$ and $\bm v'_{i1}$ are respectively the left and right eigenvectors associated with $\lambda_i$. The dynamics of $\bm z(t)$ depend only on eigenvalues $\lambda_i$ that satisfy $F\bm v'_{i1}\neq 0$. From Theorem \ref{thm.stroutctrl}, any $(\lambda_i,\bm v_{i1})$ satisfying $F\bm v'_{i1} \neq 0$ belongs to $\Sigma$ and can be placed.
\end{rem}

Following the proof of Theorem~\ref{thm.stroutctrl}, if $(B^\transp,A^\transp;F)$ is functionally observable, then $\bm z = F\bm x = F_c\bm x_c$ is only a function of the controllable variables. Thus, Theorem~\ref{thm.stroutctrl} establishes a necessary and sufficient condition for the stabilization problem of $\bm z(t)$: for any %given number 
$\alpha\in\C$, there exists a matrix $K$ such that the solution of $\dot{\bm x} = (A - BK + \alpha I_n)\bm x$ satisfies $\lim_{t \rightarrow \infty} (F\bm x(t) - \bm z^*) = 0$ for any initial condition $\bm x(0)$ and some state $\bm z^*\in\R^n$. Note that this formulation does not account for the placement of unstable poles outside of $\Sigma$, which may lead to instability in other state variables (cf. \cite[Sec. 5.6]{wonham1979book}).%Therefore, Theorem~\ref{thm.stroutctrl} establishes a necessary and sufficient condition for the stabilization problem of a target vector $\bm z(t)$ via static feedback control, which can be achieved by placing the eigenpairs $(\lambda_i,\bm v_{ij})\in\Sigma$ on the left half of the complex plane. 
%This is evident in the following remark.

Fig.~\ref{fig.outctrbviafeed} summarizes the established relation between output controllability, functional observability, and the design of static feedback systems. A condition for the existence of a solution to the partial pole placement problem is also proposed in \cite[Th.~4.4]{wonham1979book}. This theorem states that every eigenpair in $\Sigma$ can be arbitrarily placed in if and only if $\mathcal X_{\Sigma} \subseteq \langle A | \Im(B) \rangle + \mathcal V'$, where $\mathcal X_{\Sigma}=\{\bm v_{ij}\in\R^n : (\lambda_i,\bm v_{ij})\in\Sigma\}$ is the modal subspace of $A$ that influences the dynamics of $\bm z(t)$ and $\mathcal V'$ is the supremal element among all the $(A,B)$-invariant subspaces contained in $\operatorname{Ker}(F)$. Despite the strong geometric interpretation of this theorem, testing the condition above for $\mathcal X_{\Sigma}$ requires determining $\mathcal V'$, which can be computationally demanding for large-scale systems. In contrast, Theorem \ref{thm.stroutctrl} provides an equivalent, but much simpler, existence condition that is solely based on the rank test \eqref{eq.functobsv} for $(B^\transp,A^\transp;F)$.

\begin{figure}[t!]
\begin{center}
\includegraphics[width=0.95\columnwidth]{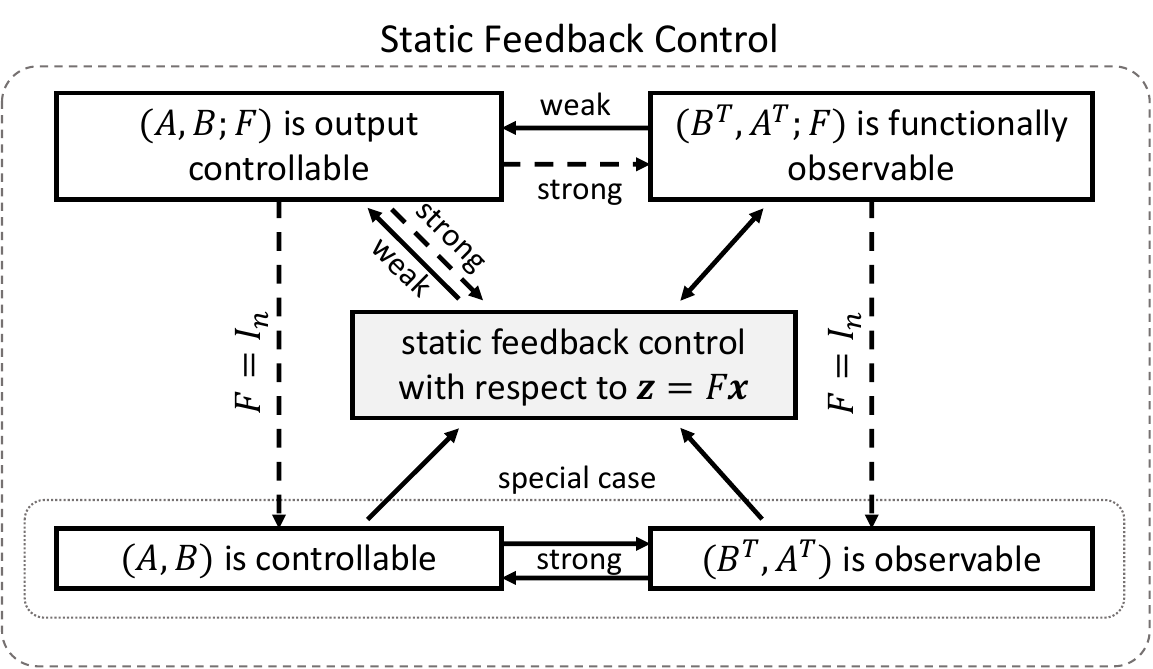} 
\caption{\label{fig.outctrbviafeed} Relation between output controllability, functional observability, and static feedback system design for target control (with arbitrary pole placement). Solid arrows indicate one property implies another, while dashed arrows indicate conditional implications.}
\end{center}
\vspace{-0.8cm}
\end{figure}

\vspace{-0.2cm}
\subsection{Separation principle}

Building on the previous result, we investigate the problem of \textit{target control via (observer-based) output feedback}. The next result establishes a separation principle, stating that the design of target feedback controllers and functional observers are mutually independent in closed-loop systems if the systems $(B^\transp,A^\transp;F)$ and $(C,A;-K)$ are functionally observable. The functional observability of $(B^\transp,A^\transp;F)$ implies the existence of some $K$ such that the poles in $\Sigma$ can be arbitrarily placed in closed loop (following Theorem \ref{thm.stroutctrl}), whereas the functional observability of $(C,A;-K)$ guarantees that the designed input $\bm u = -K\bm x$ can be directly estimated from the output $\bm y$ using a functional observer (following Refs. \cite{Darouach2000,Fernando2010}).

\begin{cor} [Separation principle]     \label{theor.separationprinciple}
Consider~a~system \eqref{eq.dynsys}--\eqref{eq.output} coupled with the functional observer-based feedback controller
\begin{align}
    \dot{\bm w} = N\bm w + J\bm y, \label{eq.estimationcontroldyn} \\
    {\bm u} = D\bm w + E\bm y, \label{eq.estimationcontrolout}
\end{align}

\noindent
where $\bm w\in\R^{n_0}$ is the internal state of the functional observer and $\bm u$ is both the output of the observer and the controller signal. Let the target vector sought to be controlled be defined by \eqref{eq.target}. The closed-loop system has a characteristic polynomial
\begin{equation}
    p(\lambda) = \operatorname{det}(\lambda I_n - A + BK)\operatorname{det}(\lambda I_{n_0} - N) .
\label{eq.charactpoly}
\end{equation}
\noindent
There exist matrices $(K,N,J,D,E)$ such that the roots $p(\lambda_i)=0$ can~be arbitrarily placed for every $(\lambda_i,\bm v_{ij})\in\Sigma$ and every eigenvalue $\lambda_i$ of matrix $N$ if the triples $(B^\transp,A^\transp;F)$ and $(C,A;-K)$ are both functionally observable.
\end{cor}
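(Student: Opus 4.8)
The plan is to reproduce the classical separation structure: first synthesize the static gain $K$ from Theorem~\ref{thm.stroutctrl} using the functional observability of $(B^\transp,A^\transp;F)$, then synthesize $(N,J,D,E)$ as a functional observer that reconstructs the functional $-K\bm x$ using the functional observability of $(C,A;-K)$, and finally verify the factorization \eqref{eq.charactpoly} by a block-triangularizing change of coordinates, so that the two pole sets turn out to be assigned independently.

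In detail: \textbf{(i)~Controller.} Because $(B^\transp,A^\transp;F)$ is functionally observable, Theorem~\ref{thm.stroutctrl} provides a gain $K$ that arbitrarily places every eigenpair of $\Sigma$ in $\dot{\bm x}=(A-BK)\bm x$; fix this $K$. \textbf{(ii)~Observer.} Because $(C,A;-K)$ is functionally observable, the functional-observer theory of \cite{Darouach2000,Fernando2010} supplies an order $n_0$, a matrix $T\in\R^{n_0\times n}$, and matrices $(N,J,D,E)$ (together with an input-injection matrix $H:=TB$) satisfying the Sylvester-type relations $TA-NT=JC$ and $DT+EC=-K$, with the spectrum of $N$ freely assignable. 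Writing the observer as $\dot{\bm w}=N\bm w+J\bm y+H\bm u$, $\bm u=D\bm w+E\bm y$ and eliminating $\bm u$ from the dynamics via the output equation absorbs the $H\bm u$ term into the $\bm w$- and $\bm y$-coefficients, yielding the input-free compensator form \eqref{eq.estimationcontroldyn}--\eqref{eq.estimationcontrolout}. \textbf{(iii)~Decoupling.} With $\bm e:=\bm w-T\bm x$, the relations in (ii) give $\dot{\bm e}=N\bm e$ (the $\bm x$-terms, including those carried by $\bm u$, cancel) and $\bm u=D\bm e-K\bm x$, hence $\dot{\bm x}=(A-BK)\bm x+BD\bm e$. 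Thus in the coordinates $(\bm x,\bm e)$ the closed-loop matrix is block upper-triangular with diagonal blocks $A-BK$ and $N$; since this is a similarity transformation, the closed-loop characteristic polynomial equals $\det(\lambda I_n-A+BK)\det(\lambda I_{n_0}-N)$, establishing \eqref{eq.charactpoly}. \textbf{(iv)~Conclusion.} The roots of the first factor are the closed-loop eigenvalues governed by $K$ and, by (i), include the arbitrarily placed eigenvalues of every $(\lambda_i,\bm v_{ij})\in\Sigma$; the roots of the second factor are the arbitrarily assigned eigenvalues of $N$. Since (i) and (ii) were carried out without reference to one another, this is the asserted separation principle.

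The main obstacle I anticipate is the bookkeeping in steps (ii)--(iii): keeping careful track of the input-injection term $H\bm u=TB\bm u$ when passing from the standard functional observer to the input-free compensator form \eqref{eq.estimationcontroldyn}, so that the matrix denoted $N$ in \eqref{eq.charactpoly} is identified consistently with the one governing the decoupled error dynamics $\dot{\bm e}=N\bm e$ and whose spectrum the observer design renders freely assignable. A secondary point worth stating explicitly is that assigning \emph{every} eigenvalue of $N$ arbitrarily may require the observer order $n_0$ to exceed the minimal value $r$; this freedom is afforded by the cited constructions, and the functional observability of $(C,A;-K)$ is precisely the hypothesis under which such a functional observer exists.
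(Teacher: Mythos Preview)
Your proof follows the same route as the paper's: invoke Theorem~\ref{thm.stroutctrl} for $K$, cite the Darouach--Fernando functional-observer conditions $NT+JC-TA=0$ and $DT+EC=-K$ for $(N,J,D,E,T)$, and block-triangularize via $\bm e=\bm w-T\bm x$ to obtain \eqref{eq.charactpoly}. Your explicit bookkeeping of the input-injection term $TB\bm u$ is in fact more careful than the paper's own derivation, which writes the observer in the input-free form \eqref{eq.estimationcontroldyn} yet passes directly to $\dot{\bm e}=N\bm e$; that step is clean only once the $TB\bm u$ term is accounted for (equivalently, once the coefficient matrix in \eqref{eq.estimationcontroldyn} is identified as $N+TBD$), exactly the subtlety you flag.
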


\begin{proof}
Substituting \eqref{eq.estimationcontroldyn} and \eqref{eq.estimationcontrolout} into \eqref{eq.dynsys} leads to 
\begin{equation}
    \begin{bmatrix}
        \dot{\bm x} \\ \dot{\bm w}
    \end{bmatrix}
    =
    \begin{bmatrix}
        A + BEC & BD \\
        JC & N
    \end{bmatrix}
    \begin{bmatrix}
        {\bm x} \\ {\bm w}
    \end{bmatrix}
    .
    \label{eq.feedbackestimatesys}
\end{equation}
Let $-K\bm x$ be the functional sought to be estimated by the functional observer \eqref{eq.estimationcontroldyn}--\eqref{eq.estimationcontrolout}. Since $(C,A;-K)$ is functionally observable by assumption, there exist matrices $(N,J,D,E)$ such that the following conditions are satisfied \cite[Th.~3]{Fernando2010}: i) $N$ is Hurwitz, ii) $NT + JC - TA = 0$, and iii) $- DT - EC = K$. From \cite[Th. 1]{Darouach2000}, these conditions further imply that the functional observer is stable (i.e., $\lim_{t\rightarrow\infty}({\bm u} (t) +K\bm x (t)) = 0$) and, equivalently, $\lim_{t\rightarrow\infty}(\bm w (t) - T\bm x(t)) = 0$ for some matrix $T$. Applying equalities (ii)-(iii) and $\bm e = \bm w - T\bm x$ to \eqref{eq.feedbackestimatesys} yield
\begin{equation}
    \begin{bmatrix}
        \dot{\bm x} \\ \dot{\bm e}
    \end{bmatrix}
    =
    \begin{bmatrix}
        A - BK & BD \\ 0 & N
    \end{bmatrix}
    \begin{bmatrix}
        {\bm x} \\ {\bm e}
    \end{bmatrix}.
    \label{eq.separationprinciple}
\end{equation}
\noindent
The eigenvalues of the matrix above depend exclusively on $(A-BK)$ and $N$, leading to Eq.~\eqref{eq.charactpoly}.
Let system \eqref{eq.separationprinciple} be decomposed as \eqref{eq.decomposedsys} via $\bar{\bm x} = P\bm x$, yielding
$\operatorname{det}(\lambda I_n - A+BK) = \operatorname{det}(\lambda I_m - A_c + B_c K_c) \operatorname{det}(\lambda I_{n-m} - A_u)$. Since $(A_c,B_c)$ is controllable, there exists some $KP^\transp = [K_c \,\, K_u]$ such that all roots of $\operatorname{det}(\lambda I_m - A_c + B_c K_c)$ can be arbitrarily placed. From Theorem~\ref{thm.stroutctrl}, if $(B^\transp,A^\transp;F)$ is functionally observable, any $(\lambda_i,\bm v_{ij})\in\Sigma$ corresponds to an eigenvalue $\lambda_i$ of $A_c$. 
\end{proof}

\arthur{Corollary~\ref{theor.separationprinciple}} includes, as a special case, the classic separation principle. In this case, $\Sigma$ contains all eigenpairs of $A$ and the functional observer reduces to a full-state observer.

%======================================================================
\vspace{-0.2cm}
\subsection{Example}
\label{sec.separationex}

\begin{figure}[b]
\vspace{-0.4cm}
    \centering
    \includegraphics[width=0.75\columnwidth]{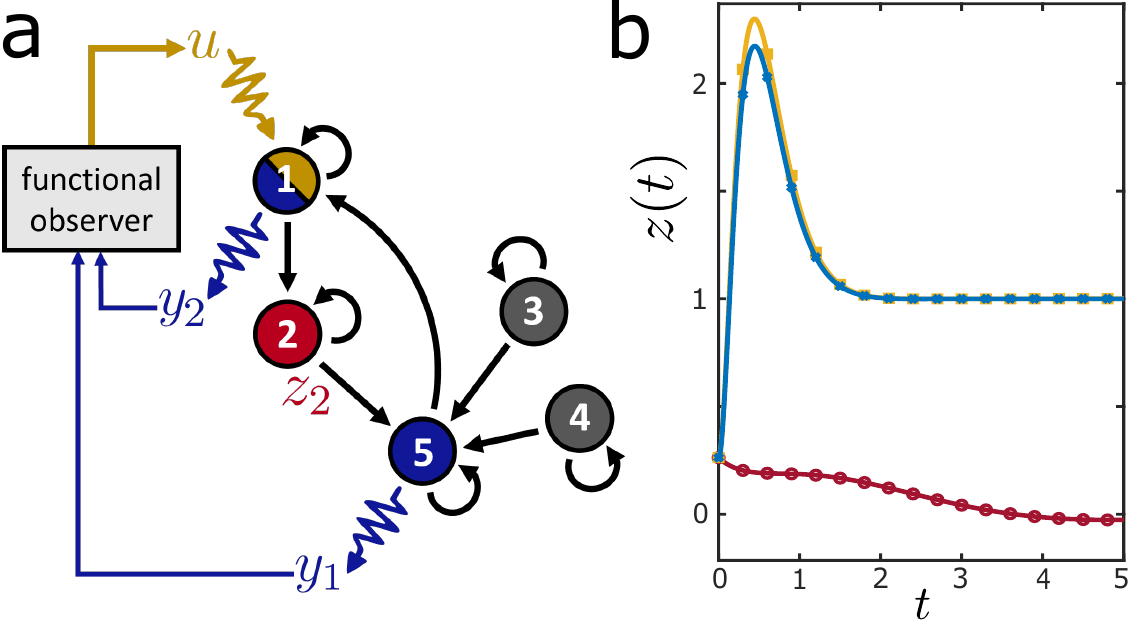}
    \caption{
    \label{fig.separation} Target control via output feedback. (a) Graph representation of the closed-loop system for target control via output feedback. %The input, output, and target variables are highlighted by yellow, blue and red nodes, respectively. 
    Note that the input signal is the observer's output. 
    (b) State evolution of $\bm z(t) = x_2(t)$ in the open-loop system (red), closed-loop system via full-state feedback (blue), and closed-loop system via functional observer-based output feedback (yellow). %The initial conditions were randomly drawn from an uniform distribution ($\bm x_i(0) \sim \mathcal U[0,1], \forall i$).
    }
\end{figure}

%% Example
Consider system \eqref{eq.dynsys}--\eqref{eq.target}, where $A$ and $F$ are given by \eqref{eq.pairACexample} ($a_{22} = -1$, $a_{33}=a_{44}=-3$), $B = \bm e_1$, $C = [\bm e_1,\bm e_5]^\transp$, and $\bm e_i$ is a basis vector with 1 in the $i$th entry and 0 elsewhere. Fig.~\ref{fig.separation}a shows a graph of the closed-loop system.  
%
%
%
%% Target control
The system is uncontrollable, but the dual system $(B^\transp,A^\transp;F)$ is functionally observable. %Thus, following Theorem~\ref{thm.stroutctrl}, the target control problem can be solved via static feedback. 
%
% This is evident by applying the decomposition \eqref{eq.decomposedsys} with $P = [\bm e_1 \,\, \bm e_2 \,\, \bm e_4 \,\, \bm e_5 \,\, \bm e_3]$, %(computed using the staircase algorithm via the Matlab function \texttt{ctrbf}, 
% where $\bm e_i$ is a basis vector with 1 in the $j$th position and 0 elsewhere, 
% and noting that $[F_c \, | \, F_u] = [0 \,\, 1 \,\, 0 \, | \, 0 \, \, 0]$. %(with $F_u = 0$, as expected from  Theorem~\ref{thm.stroutctrl}). 
%
Thus, following Theorem~\ref{thm.stroutctrl}, the eigenvalues $\lambda = \{-2, -0.5\pm i0.866\}$ of $A$, which comprise set $\Sigma$, can be arbitrarily placed via feedback. %In turn, 
Note that the eigenvalues $\lambda = -3$ do not belong to $\Sigma$ and cannot be placed. %as they have eigenvectors with zero projection onto $\operatorname{row}(F)$ 
%do not belong to $\Sigma$ and 
%and also do not influence $\bm z(t)$. 

As an example of closed-loop design, consider the following specifications for the response of $\bm z(t)$: place the poles in $\Sigma$ at $\{-4, -5, -6\}$ and stabilize it at setpoint $\bm z^* = 1$. 
This can be achieved using the full-state feedback $\bm u = \bm r - K\bm x$, where $K = [12, -47, 0, 0, 59]$ and $\bm r = -120$.  %The reference signal $\bm r$ successfully sets the steady state of the target variable at the specified value $\bm z^*$, as intended. Moreover, the response of the target variable in the closed-loop system is substantially faster than in the open-loop system due to the leftward shift of the poles in the complex plane.
%
%% Target control fails
%
% On the other hand, if XXX, then the system is only weakly dual and the condition of Theorem \ref{thm.stroutctrl} is not satisfied. Therefore, applying any canonical decomposition yields $F_u\neq 0$ and, consequently, there will be eigenvalues corresponding to the uncontrollable subsystem $(A_u,0)$ which influence the dynamics of $\bm z = F_c\bm x_c + F_u\bm x_u$ and that cannot be arbitrarily placed.
%
%
%% Output feedback
Since $(C,A;-K)$ is functionally observable, we can also solve this problem via output feedback by designing the functional observer 
%Considering the output matrix $C_1 = [0 \,\, 0 \,\, 0 \,\, 0 \,\, 1]$, as in Section~\ref{sec.dualityex}, the system $(C_1,A;-K)$ is not functionally observable and, consequently, estimation of the control signal $\bm u = -K\bm x$ does not converge. Thus, more (or different) types of output signals are required. 
as per \cite[Sec. 3.6.1]{Trinh2012}. This yields $N = -1$, $J = [0, -47]$, $D = 1$, $E = [-59, -12]$, and $T = [0, 47, 0, 0, 0]$. Following the separation principle, the design of the feedback matrix and functional observer were carried out independently. Fig.~\ref{fig.separation}b shows simulations for the open-loop and closed-loop systems.%, showing that the controlled system has faster response and settles at the specified reference. The slightly slower response of the output-feedback system is due to the functional observer's convergence.

%Fig.~\ref{fig.separation}b shows simulations for the target control system via full-state feedback as well as (functional observer-based) output feedback. As expected, the response of the target variable in the closed-loop systems is substantially faster than in the open-loop system due to the allocated poles' position in the complex plane. }

% The open-loop system $A$ has poles at $\lambda(A) = \{-2.52, -0.24\pm 0.86\}$. Suppose the control goal is..

% It is also easy to verify that the triple $(A,B;F)$ is output controllable since condition~\eqref{eq.outputctrb} holds. 

%=======================================================================
\vspace{-0.25cm}
\section{Conclusion}
\label{sec.conclusion}

% Although routinely invoked in many fields, the precise definition of duality depends on the context. In the words of Michael Atiyah, ``duality in mathematics is not a theorem, it is a principle.''

%Our results establish a duality principle between the generalized notions of output controllability and functional observability. In contrast with the special cases of full-state controllability and observability, the relation between these generalized properties is not always bidirectional (functional observability always implies the output controllability of the dual system, but the converse is not necessarily true). The duality is strong (bidirectional) only under the condition that the controllable and the uncontrollable sets are orthogonal following a projection onto the row space of $F$.
%
%Drawing from the strong duality principle, we show that the target control of a system via static feedback is possible if and only if its dual system is functionally observable. Consequently, the design of a static feedback system (for target control) and of a functional observer (for output feedback) are independent, generalizing the well-known separation principle in modern control theory.

%Beyond the theoretical significance of connecting two previously unrelated properties, 
The duality established here is expected to allow techniques developed for functional observability to be employed in the study of output controllability, and vice versa. 
As shown in a recent application to structured systems \cite{Montanari2023target}, the strong duality principle enables algorithms developed to optimally place actuators for target control in large-scale networks \cite{Gao2014,Waarde2017,Czeizler2018} to be used to optimally place sensors for target estimation \cite{Montanari2022}.
Based on similar arguments, we expect that methods developed for the design of functional observers can also be used to design target controllers, and vice versa. In particular, the open problem of designing target controllers via \textit{dynamic} feedback could then be approached by mapping the solution from a dual problem of functional observer design, which has well-established scalable solutions \cite{Darouach2000,Fernando2010,Trinh2012,Habibi2022,Montanari2022}. %Rotella2016b
For example, in the control of cluster synchronized states, one may leverage design methods of functional observers originally developed for the estimation of average cluster states \cite{niazi2020average,niazi2023clustering}. 
Finally, we anticipate that our results may be extended to broader classes of systems, including stochastic \cite{Georgiou2013}, nonlinear \cite{Montanari2022nonlinear}, and differential-algebraic \cite{Campbell1991} systems.

% stochastic observability and controllability \cite{Chen1980}, input observability \cite{Hou1998}), separation nonlinear \cite{Kilicaslan2009}.

%=======================================================================
% \begin{ack}                               
% C.D. and A.E.M. acknowledge support from the U.S. Army Research Office (Grant W911NF-19-1-0383).
% \end{ack}

\vspace{-0.3cm}

\end{document}